\documentclass{amsproc}
\usepackage{euscript}
\usepackage{cases}
\usepackage{mathrsfs}
\usepackage{bbm}
\usepackage{amssymb}
\usepackage{amsfonts,amsmath,amsxtra,mathdots,mathabx}
\usepackage{color}
\usepackage{hyperref}
\usepackage{tikz}
\usepackage{appendix,upgreek}

\textwidth         375pt

\allowdisplaybreaks

\DeclareFontFamily{U}{matha}{\hyphenchar\font45}
\DeclareFontShape{U}{matha}{m}{n}{
	<5> <6> <7> <8> <9> <10> gen * matha
	<10.95> matha10 <12> <14.4> <17.28> <20.74> <24.88> matha12
}{}
\DeclareSymbolFont{matha}{U}{matha}{m}{n}

\DeclareMathSymbol{\Lt}{3}{matha}{"CE}
\DeclareMathSymbol{\Gt}{3}{matha}{"CF}

\def\splus{\text{\scalebox{0.86}{$+$}}}
\def\sminus{\text{\scalebox{0.86}{$-$}}}
\def\spm{\text{\scalebox{0.86}{$\pm$}}}
\def\sasymp{\text{\scalebox{0.9}{$\asymp$}}} 

\DeclareSymbolFont{mathc}{OML}{txmi}{m}{it}
\DeclareMathSymbol{\varvv}{\mathord}{mathc}{118}
\DeclareMathSymbol{\varww}{\mathord}{mathc}{119}
\DeclareMathSymbol{\varnu}{\mathord}{mathc}{"17}


\DeclareSymbolFont{mathd}{OML}{ztmcm}{m}{it}
\DeclareMathSymbol{\varalpha}{\mathord}{mathd}{11}
\DeclareMathSymbol{\vvlambda}{\mathord}{mathd}{21}
 
\def\varlambda{\text{\raisebox{- 2 \depth}{\scalebox{0.9}[0.98]{$\vvlambda$}}\hskip -1pt}}

\def\SO{\text{\raisebox{- 2 \depth}{\scalebox{1.1}{$ \text{\usefont{U}{BOONDOX-calo}{m}{n}O} \hskip 0.5pt $}}}}

\DeclareMathSymbol{\depsilon}{\mathord}{mathd}{15}

\def\vepsilon{\upvarepsilon}

\DeclareMathSymbol{\varchi}{\mathord}{mathd}{31}

\newcommand{\BQ}{{\mathbb {Q}}}

 \newcommand{\BZ}{{\mathbb {Z}}}

\newcommand{\RC}{{\mathrm {C}}}

\newcommand{\GL}{{\mathrm {GL}}}

\newcommand{\SL}{{\mathrm {SL}}}

\newcommand{\ds}{\displaystyle}

\newcommand{\ra}{\rightarrow}

\def\-{^{-1}}

\def\mod{\mathrm{mod}\,  }

\def\nd{\mathrm{d}}

\def\SB{\text{\raisebox{- 2 \depth}{\scalebox{1.1}{$ \text{\usefont{U}{BOONDOX-calo}{m}{n}B} \hskip 0.5pt $}}}}

\def\SD{\text{\raisebox{- 2 \depth}{\scalebox{1.1}{$ \text{\usefont{U}{BOONDOX-calo}{m}{n}D} \hskip 0.5pt $}}}}

\def\SO{\text{\raisebox{- 2 \depth}{\scalebox{1.1}{$ \text{\usefont{U}{BOONDOX-calo}{m}{n}O} \hskip 0.5pt $}}}}

\def\SM{\text{\raisebox{- 2 \depth}{\scalebox{1.1}{$ \text{\usefont{U}{BOONDOX-calo}{m}{n}M} \hskip 0.5pt $}}}}

\def\SN{\text{\raisebox{- 2 \depth}{\scalebox{1.1}{$ \text{\usefont{U}{BOONDOX-calo}{m}{n}N} \hskip 0.5pt $}}}}

\def\SS{\text{\raisebox{- 2 \depth}{\scalebox{1.1}{$ \text{\usefont{U}{BOONDOX-calo}{m}{n}S}\hskip 0.5pt $}}}}

\def\ST{\text{\raisebox{- 2 \depth}{\scalebox{1.1}{$ \text{\usefont{U}{BOONDOX-calo}{m}{n}T}\hskip 1pt $}}}}

\def\SZ{\text{\raisebox{- 2 \depth}{\scalebox{1.1}{$ \text{\usefont{U}{BOONDOX-calo}{m}{n}Z}\hskip 1pt $}}}}

\def\lp {\left (}
\def\rp {\right )}

\def\shskip{\hskip 0.5 pt}
\def\snatural{\text{\scalebox{0.85}{$\natural$}}}

\def\SB{\text{\raisebox{- 2 \depth}{\scalebox{1.1}{$ \text{\usefont{U}{BOONDOX-calo}{m}{n}B} \hskip 0.5pt $}}}}

\def\SS{\text{\raisebox{- 2 \depth}{\scalebox{1.1}{$ \text{\usefont{U}{BOONDOX-calo}{m}{n}S}\hskip 0.5pt $}}}}

\def\ST{\text{\raisebox{- 2 \depth}{\scalebox{1.1}{$ \text{\usefont{U}{BOONDOX-calo}{m}{n}T}\hskip 1pt $}}}}

\def\SZ{\text{\raisebox{- 2 \depth}{\scalebox{1.1}{$ \text{\usefont{U}{BOONDOX-calo}{m}{n}Z}\hskip 1pt $}}}}

\def\SDH{\text{\raisebox{- 1 \depth}{\scalebox{1.03}{$ \text{\usefont{U}{dutchcal}{m}{n}H}  $}}}}

\newcommand{\delete}[1]{}

\theoremstyle{plain}

\newtheorem{thm}{Theorem}[section] \newtheorem{cor}[thm]{Corollary}
\newtheorem{lem}[thm]{Lemma}

\newtheorem {rem}[thm]{Remark}

\numberwithin{equation}{section}

\begin{document}

	\title[Cubic Moment of Maass-form $L$-functions]{{Proof of the Strong Ivi\'c Conjecture for the Cubic Moment of Maass-form $L$-functions}}

\begin{abstract}
	In this paper, we prove the following asymptotic formula for the spectral cubic moment of central $L$-values:
	\begin{equation*} 	
		\sum_{t_f  \leqslant T} \frac {2 L \big( \tfrac 1 2 , f \big)^3} {L(1, \mathrm{Sym}^2 f)}  + \frac {2}  {\pi}   \int_{ 0}^{T} \frac {\left| \zeta  \big(\tfrac 1 2 + it \big) \right|^{6} } { | \zeta   (1 + 2 it  )  |^2 }
		\shskip   \mathrm{d} \shskip t = T^2 P_3 (\log T) + O (T^{1+\vepsilon}) ,
	\end{equation*}
where $f$ ranges in an orthonormal basis of (even) Hecke--Maass cusp forms, and $P_3$ is a certain polynomial of degree $3$. It improves on the error term $O (T^{8/7+\vepsilon})$ in a paper of Ivi\'c and hence confirms his strong conjecture for the cubic moment. This is the first time that the (strong) moment conjecture is fully proven in a cubic case. Moreover, we establish the short-interval variant of the above asymptotic formula on intervals of length as short as $T^{\vepsilon}$. 
\end{abstract}

\author{Zhi Qi}
\address{School of Mathematical Sciences\\ Zhejiang University\\Hangzhou, 310027\\China}
\email{zhi.qi@zju.edu.cn}

\thanks{The author was supported by a grant (\#12071420) from the National Natural Science Foundation of China.}

\dedicatory{\normalsize In memory of Professor Aleksandar Ivi\'c.}

\subjclass[2010]{11F66, 11F12}
\keywords{Maass cusp forms,  $L$-functions, the cubic moment,  the moment conjecture.}

\maketitle

\section{Introduction}

\subsection{Ivi\'c's Moment Conjectures}

Let  $\SB $ be an orthonormal basis of even Hecke--Maass cusp forms $f (z)$ for $\SL_2 (\BZ)$, with  Laplacian eigenvalue $ \frac 1 4 + t_f^2$ ($t_f \geqslant 0$) and Fourier expansion 
\begin{align*}
	f  (z) = \sum_{n \neq 0} \rho_f (n) \sqrt{y} K_{it_f} (2\pi|n| y) e (n x), \qquad z= x+iy, \, y > 0.
\end{align*}     
Let $L(s, f)$ and $L (s, \mathrm{Sym}^2 f)$ be the standard and the symmetric square $L$-functions attached to $f (z)$.  Define $\omega_f$ to be the harmonic weight  
\begin{align}
	  \omega_f = \frac {|\rho_f (1)|^2} {\cosh (\pi t_f)} = \frac {2} {L(1, \mathrm{Sym}^2 f)} . 
\end{align}  Define the $k$-th moment of central $L$-values of   height $T$   as follows: 
\begin{align}\label{1eq: truncated moments}
	\SM_k  (T) = 	\sum_{t_f  \leqslant T} \omega_f L \big( \tfrac 1 2 , f \big)^k + \frac {2}  {\pi}   \int_{\shskip 0}^{T}        \frac {\left| \zeta  \big(\tfrac 1 2 + it \big) \right|^{2 k} } { | \zeta   (1 + 2 it  )  |^2 }
	\shskip   \nd \shskip t .
\end{align} 
In 2002, Ivi\'c \cite{Ivic-Moment-1} conjectured  that 
\begin{align}\label{0eq: Ivic conjecture}
	\SM_k (T) = T^2 P_{  (k^2-k)/2} (\log T) + O_{\vepsilon} (T^{1+c_k+\vepsilon}), 
\end{align}
where $P_{(k^2-k)/2} $ is a suitable polynomial of degree $   (k^2-k)/2$ whose coefficients depend on $k$, and $0 \leqslant c_k < 1$. He verified \eqref{0eq: Ivic conjecture} in the cases $k=3$ and $k= 4$ with $c_3 = 1/7$ and $c_4 = 1/3$.\footnote{For $k=4$, however, this result was first claimed in a preprint of  Kuznetsov \cite{Kuznetsov-4th-Moment} in 1999, for which Ivi\'c provided two proofs---the first is  a correction and simplification of the original proof of Kuznetsov, and the second is an elaboration of a method of  Jutila \cite{Jutila-4th-Moment}.}  Furthermore, a stronger conjecture of Ivi\'c is that $c_k = 0$; namely
\begin{align}\label{0eq: Ivic conjecture, strong}
	\SM_k (T) = T^2 P_{ (k^2-k)/2} (\log T) + O_{\vepsilon} (T^{1+ \vepsilon}).  
\end{align}
For $k = 0$, it may be considered as the (weighted) Weyl law if the odd Maass forms are included (note that the central $L$-value vanishes if the form is odd), and Xiaoqing Li \cite{XLi-Weyl} proved the currently best error bound $O (T/\log T)$. For $k = 1$ and $k = 2$, \eqref{0eq: Ivic conjecture, strong} is proven respectively by   Ivi\'c--Jutila \cite{Ivic-Jutila-Moments} and Motohashi \cite{Motohashi-JNT-Mean} with sharper error terms $O (T \log^{\vepsilon} T)$ and $ O (T \log^{6} T) $. For $k \geqslant 5$, Ivi\'c's (weak) conjecture \eqref{0eq: Ivic conjecture} is still wide open.

For other typical families of  $L$-functions, we refer the reader to the work of Conrey,  Farmer, Keating, 
Rubinstein, and Snaith \cite[\S 1.3]{Conrey-FKRS-Moments} for their moment conjectures and connection to the Random Matrix Theory. A common feature of these conjectures is the ``square-root rule''---the magnitude of the error terms is  the square root of that of the main terms. As such, one may consider the strong Ivi\'c  conjecture \eqref{0eq: Ivic conjecture, strong} as the Maass-form analogue   of these moment conjectures.\footnote{There are abundant works related to the moments of $L$-functions for holomorphic modular forms, in either the level or the weight aspect. See for example  \cite{Duke-1995,IS-Siegel,KM-Analytic-Rank,VanderKam-Rank,BF-Moments} for $k =1, 2$, \cite{CI-Cubic,Peng-Weight,Petrow-Cubic,Young-Cubic,PY-Cubic,PY-Weyl2,Frolenkov-Cubic}   for $k=3$,  \cite{DFI-2,DFI-3,KMV-4th-Moment,Blomer-Reciprocity,Young-5th-Moment,Khan-5th} for $k = 4, 5$, but some of which, especially for $k=3, 4, 5$, are   concerned with bounds instead of asymptotics.}

In this paper, we prove Ivi\'c's strong conjecture for $k=3$. This is the first instance of the  moment conjecture that is fully proven in the cubic case $k=3$.\footnote{It is interesting to note the controversy for the cubic moment of quadratic Dirichlet $L$-functions:  \cite{Conrey-FKRS-Moments} conjectured that the error term  should have exponent $  1 / 2 $, but \cite{D-Goldfeld-H-Multiple-Dirichlet,Zhang-Dirichlet-3/4} suggested the existence of a lower-order term of exponent $3/4$. See also \cite{Young-Cubic-Moment-Dirichlet} and \cite{A-R-Dirichlet-Experiments}. }

\subsection{Main Results}

More generally, we   consider the cubic moment on short intervals.   For $ T^{\vepsilon} \leqslant M \leqslant T/3 $ define
\begin{align}
\SM_3 (T, M) =	 \sum_{  |t_f - T| \shskip \leqslant M}  \omega_f  L \big( \tfrac 1 2 , f \big)^3 + \frac 2 {\pi} \int_{\, T-M}^{T+M}        \frac {\left| \zeta \big(\tfrac 1 2 + it \big) \right|^{6} } { | \zeta  (1 + 2 it  )  |^2 } \nd \shskip t ,
\end{align}
and its smoothed variant 
\begin{align}\label{1eq: defn of N flat}
	\SM_3^{\snatural} (T, M) =	\sum_{f \in  \SB}  \omega_f L \big( \tfrac 1 2 , f \big)^3 k_{T, M}  (t_f)  + \frac 1 {\pi} \int_{-\infty}^{\infty}    \frac {\left| \zeta \big(\tfrac 1 2 + it \big) \right|^{6} } { | \zeta  (1 + 2 it  )  |^2 }
	k_{T, M} (t) \shskip   \nd \shskip t ,
\end{align}
where 
\begin{align}\label{1eq: defn k(nu)}
	  k_{T, M} (t)  =  e^{- (t  - T)^2 / M^2} + e^{-(t + T)^2 / M^2}  . 
\end{align}

A well-known result for the cubic moment of $L\big( \frac 12, f \big)$ on short intervals is the following average Lindel\"of bound of Ivi\'c \cite{Ivic-t-aspect}:
\begin{align}\label{1eq: Ivic, short interval}
	\sum_{  |t_f - T| \shskip \leqslant M}     L \big( \tfrac 1 2 , f \big)^3 \Lt_{\vepsilon} M T^{1+\vepsilon}  , 
\end{align}
By the non-negativity of $ L \big( \tfrac 1 2 , f \big)$, its Weyl-type subconvex bound follows immediately from \eqref{1eq: Ivic, short interval}.

As shown in the next lemma, $\SM_k (T, M)$ and $\SM_k^{ \snatural} (T, M)$ are connected via  an   averaging process for the $T$-parameter. 

\begin{lem}\label{lem: average}
	For  $ T^{\vepsilon} \leqslant M^{1+\vepsilon} \leqslant  H \leqslant T /3 $ we have 
	\begin{align}\label{1eq: M = int M}
		 \SM_3 (T, H) = \frac 1 {\sqrt{\pi} M} \int_{T-H}^{T+H} \SM_3^{\snatural} (K, M) \nd K + O_{\vepsilon}  (M T^{1+\vepsilon} ). 
	\end{align}
\end{lem}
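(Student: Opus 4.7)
The plan is to recognize the right-hand side of \eqref{1eq: M = int M} as a Gaussian-smoothed replacement for the sharp cutoff in $\SM_3(T, H)$ and to control the resulting mismatch via short-interval cubic moment estimates. First I would interchange the $K$-integral with the spectral sum and $t$-integral defining $\SM_3^{\snatural}(K, M)$, so that the right-hand side of \eqref{1eq: M = int M} becomes the spectral sum and the zeta integral weighted by the single kernel
\begin{align*}
	\Phi(t) := \frac{1}{\sqrt{\pi}\, M} \int_{T-H}^{T+H} k_{K, M}(t) \, \nd K.
\end{align*}
Exploiting the symmetries $k_{K, M}(t) = k_{K, M}(-t) = k_{-K, M}(t)$, the cross-term $e^{-(K+t)^2/M^2}$ reorganizes into an integral over $K \in [-T-H, -T+H]$, giving
\begin{align*}
	\Phi(t) = \frac{1}{\sqrt{\pi}\, M} \int_{\Omega} e^{-(K-t)^2/M^2} \, \nd K, \qquad \Omega := [-T-H, -T+H] \cup [T-H, T+H].
\end{align*}
This is precisely the Gaussian smoothing of $\mathbf{1}_\Omega$ on scale $M$, expressible explicitly as a difference of error functions.

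Next I would quantify how close $\Phi$ is to $\mathbf{1}_\Omega$. Setting $\Delta := M T^{\vepsilon}$, the standard tail estimate $1 - \mathrm{erf}(x) \Lt_A x^{-A}$ yields $|\Phi(t) - \mathbf{1}_\Omega(t)| \Lt_A T^{-A}$ whenever $|t|$ lies outside the four transition intervals $|\,|t| - (T \pm H)\,| \leqslant \Delta$, and the trivial bound $|\Phi(t) - \mathbf{1}_\Omega(t)| \leqslant 1$ inside them. The hypothesis $M^{1+\vepsilon} \leqslant H$ ensures these transition regions are pairwise disjoint and contained inside $\pm [T-H, T+H]$. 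Moreover, the evenness in $t$ of $|\zeta(1/2 + it)|^6/|\zeta(1 + 2it)|^2$ together with $t_f > 0$ shows that the $\mathbf{1}_\Omega$-contribution reassembles to exactly $\SM_3(T, H)$, since $\frac{2}{\pi} \int_{T-H}^{T+H} = \frac{1}{\pi} \int_\Omega$.

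Finally I would bound the leftover contribution supported on the transition regions, whose total Lebesgue measure is $\Lt M T^{\vepsilon}$. For the spectral sum, Ivi\'c's short-interval cubic moment bound \eqref{1eq: Ivic, short interval} applied to each transition interval of length $\Delta$, combined with the easy bound $\omega_f = 2/L(1, \mathrm{Sym}^2 f) \Lt T^{\vepsilon}$, yields $\Lt M T^{1+\vepsilon}$. For the zeta integral, the pointwise convexity-grade estimate $|\zeta(1/2+it)|^6 \Lt T^{1+\vepsilon}$ (from Weyl--Heath--Brown subconvexity) together with $|\zeta(1 + 2it)|^{-1} \Lt \log T$ makes the integrand pointwise $\Lt T^{1+\vepsilon}$ for $|t| \asymp T$, so integrating over the transition set again gives $\Lt M T^{1+\vepsilon}$. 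No serious obstacle is anticipated; the only delicate point is to choose $\Delta / M$ as a suitable positive power of $T$ so that the erf tails render the complement of the transition regions genuinely negligible, which is comfortably consistent with $\Delta = M T^{\vepsilon}$.
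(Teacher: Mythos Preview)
Your proposal is correct and follows essentially the same route as the paper: the paper simply cites the short-interval unsmoothing lemmas from \cite{Qi-Liu-Moments} (which amount to exactly the Gaussian-smoothing/error-function argument you spelled out) together with Ivi\'c's bound \eqref{1eq: Ivic, short interval} and the Weyl bound for $\zeta(\tfrac12+it)$. One small point worth making explicit: when you invoke \eqref{1eq: Ivic, short interval} on the transition intervals you are implicitly using the non-negativity of $L(\tfrac12,f)$ so that the bound on $\sum L(\tfrac12,f)^3$ also controls $\sum \omega_f |L(\tfrac12,f)|^3$; the paper lists this ingredient separately.
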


This is a simple application of Lemma 5.3 and 5.4 in \cite{Qi-Liu-Moments} (the short-interval variant of the unsmoothing process in Ivi\'c--Jutila \cite[\S 3]{Ivic-Jutila-Moments}), along with Ivi\'c's bound \eqref{1eq: Ivic, short interval}, the non-negativity of $ L \big( \tfrac 1 2 , f \big)$, and also the Weyl subconvex bound for $\zeta \big(\frac 1 2 + it\big)$. 

Our main results  are the following asymptotic formulae for $\SM_3^{\snatural} (T, M)$, $\SM_3 (T, H)$, and $\SM_3 (T)$. 

\begin{thm}\label{thm: T and M} For any $T^{\vepsilon} \leqslant M \leqslant T^{1-\vepsilon}$ we have 
	\begin{align}\label{1eq: smooth, asymptotic}
		\SM_3^{\snatural} (T, M) = \sqrt{\pi} M T P^{\snatural}_3 (\log T) + O_{\vepsilon} (T^{1+\vepsilon}), 
	\end{align}
where $ P^{\snatural}_3 $ is an explicit cubic polynomial. 
\end{thm}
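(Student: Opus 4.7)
The plan is to execute the cubic-moment machinery of Conrey--Iwaniec \cite{CI-Cubic}, refined for the spectral $t$-aspect and adapted to short intervals along the lines of \cite{Qi-Liu-Moments}. The three core inputs are an approximate functional equation (AFE) for $L(\tfrac12, f)^3$, the Kuznetsov trace formula tested against the Gaussian $k_{T, M}$, and the $\GL_3$ \Voronoi summation for the ternary divisor function $\tau_3$.

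First I would open $L(\tfrac12, f)^3$ by an AFE to write it as $2\sum_{n} \tau_3(n) \lambda_f(n) n^{-1/2} V_f(n)$, with $V_f$ smooth and essentially supported on $n \ll t_f^{3+\vepsilon}$, and likewise rewrite $|\zeta(\tfrac12+it)|^6 / |\zeta(1+2it)|^2$ as a ternary divisor sum on the Eisenstein side. Exchanging summation, $\SM_3^{\snatural}(T, M)$ becomes $2\sum_n \tau_3(n) n^{-1/2}$ times a combined spectral/Eisenstein average, to which Kuznetsov is applied. The Eisenstein contribution of Kuznetsov matches exactly the zeta integral on the left-hand side of \eqref{1eq: defn of N flat}---this structural cancellation is precisely what makes Ivi\'c's unified moment free of spurious poles. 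What remains is a $\delta(n = 1)$ diagonal together with the Kloosterman transform
\begin{equation*}
\sum_{c \geqslant 1} \frac{1}{c} \sum_{n \geqslant 1} \frac{\tau_3(n)\, S(1, n; c)}{\sqrt{n}}\, \check{k}_{T, M}\!\bigg(\frac{4\pi\sqrt{n}}{c}\bigg),
\end{equation*}
where, by stationary phase, $\check{k}_{T, M}$ is essentially concentrated on $4\pi\sqrt{n}/c \asymp T$ with amplitude $\asymp T/M$ and oscillation $e(\pm 2\sqrt{n}/c)$.

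The main term $\sqrt{\pi}\, M T\, P^{\snatural}_3(\log T)$ would then be extracted by applying $\GL_3$ \Voronoi summation in the variable $n$. This dualizes the $\tau_3$-sum against the $\GL_3$ Bessel kernel and produces a sum over $(c; m_1, m_2)$ with $m_1 m_2^2 / c^3$ of moderate size. Combining the oscillation of $\check{k}_{T, M}$ with that of the $\GL_3$ kernel and representing the result as a Mellin--Barnes integral, one shifts contours and picks up a triple pole at $s = 1$ coming from $\zeta(s)^3$; its residue produces the cubic polynomial $P^{\snatural}_3$ in $\log T$, while the overall prefactor $MT$ emerges from the Plancherel density $\int k_{T, M}(t)\, t\, \nd t \asymp \sqrt{\pi}\, MT$.

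The main obstacle is establishing the error estimate $O_{\vepsilon}(T^{1+\vepsilon})$ \emph{uniformly} for $T^{\vepsilon} \leqslant M \leqslant T^{1-\vepsilon}$. After subtracting the polar contribution at $s = 1$, the residual off-diagonal sum has trivial size $\asymp MT^{1+\vepsilon}$, so one must save an entire factor of $M$. The savings come from two sources: (i) Weil's bound for $S(1, n; c)$---or, better, an additional Poisson summation in $c$ yielding square-root cancellation; and (ii) oscillation of $\check{k}_{T, M}$ throughout its support $\sqrt{n}/c \asymp T$. Since the amplitude $T/M$ of $\check{k}_{T, M}$ grows as $M$ shrinks, no oscillatory savings may be lost. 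The technically hardest step will be a careful stationary-phase analysis in the transition range where the phases of $\check{k}_{T, M}$ and of the $\GL_3$ Bessel kernel become of comparable magnitude, followed by a bilinear/large-sieve bound for the $\tau_3$-twisted Kloosterman sum, uniform in $M$. It is here that the short-interval refinement in \cite{Qi-Liu-Moments} plays a guiding role.
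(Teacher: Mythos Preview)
Your decomposition differs from the paper's. You open $L(\tfrac12,f)^3$ with a single approximate functional equation carrying $\tau_3(n)$, so Kuznetsov produces $S(1,n;c)$ and you then invoke $\GL_3$ Vorono\"i. The paper instead factors $L(\tfrac12,f)^3 = L(\tfrac12,f)\cdot L(\tfrac12,f)^2$ via two separate equations, obtaining a double sum over $n_1,n_2$ weighted by $\tau(n_2)$; Kuznetsov then yields $S(n_1,\pm n_2;c)$, to which one applies $\GL_2$ Vorono\"i in $n_2$ followed by Poisson in $n_1$. The two routes are related (your $\GL_3$ Vorono\"i for the minimal Eisenstein series unfolds into $\GL_2$ Vorono\"i plus Poisson), but the paper's split makes the main term transparent: it arrives in two equal halves, the Kuznetsov diagonal $\SD_3 = 4\sum_n \tau(n) n^{-1}\,\SDH_3(n)$ and the Vorono\"i zero frequency $\SZ_3$, each a genuine $n$-sum whose residue at $v=0$ produces the cubic polynomial. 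In your setup the diagonal is the single term $n=1$, so the entire polynomial must be excavated from the $\GL_3$ polar contribution---doable, but less clean.

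The real gap is in your error analysis. Neither Weil's bound for $S(1,n;c)$ nor ``Poisson summation in $c$'' is the mechanism that saves the factor $M$. After Vorono\"i and Poisson the Kloosterman sum collapses to a bare additive character $e(\pm m_1 m_2/c)$ with $(m_1,c)=1$, so Weil has nothing to act on, and $c$ is never summed by Poisson. The saving comes instead from Young's stationary-phase analysis of the Fourier--Hankel transform of the Bessel integral $\SDH$: one arrives at a function $\Phi^{\pm}(m_1 m_2/c)$ admitting the Mellin representation
\begin{equation*}
\Phi^{\pm}(x) \;=\; \frac{1}{T}\int_{|t|\asymp U^{\pm}} \varlambda^{\pm}(t)\,|x|^{it}\,\nd t, \qquad T^{\vepsilon} < U^{\pm} < T/M^{1-\vepsilon},
\end{equation*}
and the prefactor $1/T$ times the integration length $T/M$ is exactly the $1/M$ you need. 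To close, one applies a \emph{second} $\GL_2$ Vorono\"i to the dual $\tau$-sum (shortening it to length $\leqslant U^{\pm}$), merges the $c$- and $m_1$-sums (their combined length is also $\Lt U^{\pm}T^{\vepsilon}$ thanks to $N_1\leqslant T^{1+\vepsilon}$, $N_2\leqslant T^{2+\vepsilon}$), and finishes with Cauchy--Schwarz and Gallagher's large sieve for Dirichlet polynomials. Your closing sentence gestures toward stationary phase and a large sieve, but without the Mellin representation and the second Vorono\"i the argument stalls at $O(MT^{1+\vepsilon})$.
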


\begin{thm}\label{thm: T and H}
	For any $T^{\vepsilon} \leqslant H \leqslant T/3$ we have 
	\begin{align}\label{1eq: asymptotic}
		\SM_3  (T, H) = \int_{T-H}^{T+H}  K P^{\snatural}_3 (\log K) \nd K + O_{\vepsilon} (T^{1+\vepsilon}). 
	\end{align} 
\end{thm}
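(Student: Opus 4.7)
The plan is to deduce Theorem \ref{thm: T and H} from Theorem \ref{thm: T and M} via the averaging identity of Lemma \ref{lem: average}. Applying Lemma \ref{lem: average} with a smoothing parameter $M$ to be chosen and substituting the asymptotic of Theorem \ref{thm: T and M} into the integrand, one obtains
\[
\SM_3(T, H) = \int_{T-H}^{T+H} K P_3^{\snatural}(\log K)\,\nd K + R(T, H; M) + O_{\vepsilon}(M T^{1+\vepsilon}),
\]
where
\[
R(T, H; M) := \frac{1}{\sqrt{\pi}\,M}\int_{T-H}^{T+H}\mathcal{E}(K, M)\,\nd K, \qquad \mathcal{E}(K, M) := \SM_3^{\snatural}(K, M) - \sqrt{\pi}\,M K P_3^{\snatural}(\log K),
\]
and Theorem \ref{thm: T and M} supplies the pointwise estimate $\mathcal{E}(K, M) \Lt_{\vepsilon} K^{1+\vepsilon}$. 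The main term matches the one claimed in Theorem \ref{thm: T and H}, so everything reduces to controlling $R(T, H; M)$ together with the unsmoothing error $O(MT^{1+\vepsilon})$.

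The naive pointwise bound on $\mathcal{E}$ gives only $R(T, H; M) \Lt HT^{1+\vepsilon}/M$, and balancing this against the unsmoothing error at the optimum $M = \sqrt{H}$ produces just $O(\sqrt{H}\,T^{1+\vepsilon})$, which exceeds the target $O(T^{1+\vepsilon})$ whenever $H$ is a positive power of $T$. To reach the claimed bound I would exploit oscillatory cancellation in the $K$-variable of $\mathcal{E}(K, M)$. In the proof of Theorem \ref{thm: T and M}, $\mathcal{E}(K, M)$ is built from off-diagonal contributions arising after the Kuznetsov trace formula and Voronoi summation on both sides of the approximate functional equation, and consists of sums of oscillatory integrals whose phases depend smoothly on the spectral parameter $K$. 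Integrating over the extra window $[T-H, T+H]$ of length $H \gg T^{\vepsilon}$ introduces a stationary phase saving of size at least $M/H$ in these off-diagonal sums, which cancels the lossy factor $H/M$ and brings $R(T, H; M)$ down to $O(T^{1+\vepsilon})$. The final choice $M = T^{\vepsilon}$ then makes the unsmoothing error $O(T^{1+2\vepsilon})$, absorbed into $O(T^{1+\vepsilon})$ after relabeling $\vepsilon$.

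The main obstacle is to make this $K$-averaging argument rigorous. Concretely, one must revisit the proof of Theorem \ref{thm: T and M}, track how each off-diagonal piece depends on $K$, and carry out the $K$-integration before taking absolute values; boundary terms near $K = T \pm H$ can be absorbed by pre-smoothing the sharp cutoff on an auxiliary scale $\ll T^{\vepsilon}$, and any near-degenerate stationary phase regions are controlled by the same analytic input already used for Theorem \ref{thm: T and M}. Equivalently, one can prove Theorems \ref{thm: T and M} and \ref{thm: T and H} in parallel by carrying the additional $K$-average through the spectral analysis from the outset, so that the sharper cutoff is handled at no extra cost.
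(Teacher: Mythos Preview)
Your strategy is correct and matches the paper's: apply Lemma~\ref{lem: average}, reopen the proof of Theorem~\ref{thm: T and M} with $T$ replaced by $K$, carry the $K$-integral through the off-diagonal analysis before estimating, and set $M=T^{\vepsilon}$ at the end.

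A few details differ from what you sketch. The saving in the $K$-integral is not stationary phase and is not of uniform size $M/H$: the paper performs a single integration by parts in $K$ (Lemma~\ref{lem: average T}) on $\int_{T-H}^{T+H}K^{1+v}\Phi^{\spm}_{K,M}(x)\,\nd K$, producing boundary terms and residual integrals all carrying an extra factor $T/U^{\spm}$; this factor exactly cancels the $U^{\spm}$ from the large-sieve bound \eqref{10eq: bound for T, 1}, and the resulting pieces are $O(T^{1+\vepsilon})$, $O(HT^{\vepsilon})$, $O(MHT^{\vepsilon})$ respectively. For this step to be legitimate one must first isolate the $K$-dependence of the Bessel weight via the splitting $g=g^{\oldstylenums{0}}+(MU/T)g^{\flat}$ in Lemma~\ref{lem: H(x), |z|>1}, so that the main weight $V^{\oldstylenums{0}\,\spm}(r)$ is independent of $K$ and supported away from $r=0$ (cf.\ Remark~\ref{rem: obstacle}); this is precisely the ``refined analysis'' the paper sets up in \S\S 3--4. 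Finally, no pre-smoothing of the sharp cutoff $[T-H,T+H]$ is needed: the boundary terms from integration by parts are estimated directly by the same large-sieve input as a single evaluation of $\SM_3^{\snatural}$.
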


\begin{cor}\label{cor: Ivic}
	We have
	\begin{align}
	\SM_3 (T) = T^2 P_{ 3} (\log T) + O_{\vepsilon} (T^{1 +\vepsilon}), 
	\end{align}
for an explicit cubic polynomial  $ P_3 $.
\end{cor}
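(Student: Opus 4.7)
The plan is to deduce Corollary \ref{cor: Ivic} from the short-interval asymptotic of Theorem \ref{thm: T and H} via a dyadic decomposition of $[0,T]$. Concretely, set $T_j = (3T/4) \cdot 2^{-j}$ and $H_j = T_j/3$ for $0 \leqslant j \leqslant J$, where $J$ is the largest integer with $T_j \geqslant 3 T^{\vepsilon}$. The windows $[T_j - H_j, T_j + H_j] = [T/2^{j+1}, T/2^j]$ are pairwise disjoint, each satisfies the hypothesis $T^{\vepsilon} \leqslant H_j \leqslant T_j/3$ of Theorem \ref{thm: T and H}, and together they tile $[T/2^{J+1}, T]$. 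Consequently
\begin{align*}
\SM_3 (T) = \sum_{j=0}^{J} \SM_3 (T_j, H_j) + \SM_3^{\mathrm{tail}} (T),
\end{align*}
where $\SM_3^{\mathrm{tail}} (T)$ collects the contributions to $\SM_3 (T)$ from $t_f \leqslant T/2^{J+1}$ and $t \leqslant T/2^{J+1}$.

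First I would dispose of the tail. Since $T/2^{J+1} \leqslant T^{\vepsilon}$, the Weyl law produces only $O(T^{2\vepsilon})$ Maass forms in the residual spectral range, each contributing $O(T^{\vepsilon})$ by convexity, while the zeta integral over $[0, T^{\vepsilon}]$ is $O(T^{\vepsilon})$ by the classical fourth-moment bound for $\zeta(\tfrac 1 2 + it)$. Thus $\SM_3^{\mathrm{tail}} (T) = O(T^{3\vepsilon})$, which is absorbed into the allowed error.

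Next, applying Theorem \ref{thm: T and H} to each pair $(T_j, H_j)$ and summing yields
\begin{align*}
\sum_{j=0}^{J} \SM_3 (T_j, H_j) = \int_{T/2^{J+1}}^{T} K P_3^{\snatural} (\log K) \shskip \nd K + \sum_{j=0}^{J} O\big( T_j^{1+\vepsilon} \big).
\end{align*}
The geometric series $\sum_j T_j^{1+\vepsilon}$ is bounded by $O(T^{1+\vepsilon})$, and the lower limit of the integral may be extended to $0$ at a cost of $O(T^{2\vepsilon})$. Finally, three successive integrations by parts convert $\int_0^T K P_3^{\snatural}(\log K) \shskip \nd K$ into $T^2 P_3 (\log T) + O(1)$ for an explicit cubic polynomial $P_3$ whose coefficients depend linearly on those of $P_3^{\snatural}$. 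Combining everything yields the claim.

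The deduction is essentially routine once Theorem \ref{thm: T and H} is in hand, with no genuine obstacle beyond bookkeeping; the point of the strong short-interval asymptotic is precisely that its error $O(T^{1+\vepsilon})$ is uniform in the relevant range and therefore survives the geometric summation over $O(\log T)$ dyadic windows, producing the Lindel\"of-strength error $O(T^{1+\vepsilon})$ for $\SM_3 (T)$.
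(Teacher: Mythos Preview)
Your proposal is correct and follows essentially the same approach as the paper: the paper's proof of Corollary~\ref{cor: Ivic} simply says to take $H=T/3$ in Theorem~\ref{thm: T and H} and apply a dyadic summation, noting that $P_3$ is determined by $\tfrac{\nd}{\nd K}\big(K^2 P_3(\log K)\big)=K P_3^{\snatural}(\log K)$, and you have spelled out exactly this argument. Two small cosmetic points: the adjacent windows $[T/2^{j+1},T/2^j]$ share endpoints rather than being strictly disjoint (harmless), and the tail zeta integral is a sixth, not fourth, moment over $[0,T^{\vepsilon}]$, but any crude pointwise bound already makes it $O(T^{O(\vepsilon)})$.
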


Although the error terms for $\SM_3^{\snatural} (T, M) $ and $\SM_3  (T, H) $ in Theorem \ref{thm: T and M} and \ref{thm: T and H} are of the same strength, the proof of the latter requires more refined analysis.

\subsection{Backgrounds and Our Method} 

For the twisted second moment of central $L$-values for Maass forms, an explicit formula of Kuznetsov--Motohashi   (\cite{Kuznetsov-Motohashi-formula,Motohashi-JNT-Mean,Motohashi-Riemann}) is particularly useful. It was used by the ``Troika", Motohashi, Ivi\'c, and Jutila \cite{Motohashi-JNT-Mean,Motohashi-Riemann,Ivic-t-aspect,Ivic-Moment-1,Jutila-4th-Moment} to study the second, third, and fourth moments, and recently by \cite{SH-Liu-Maass,BHS-Maass} to obtain lower bounds for the non-vanishing proportion. 

In our recent work \cite{Qi-Liu-Moments},  the results in \cite{BHS-Maass} were recovered and  extended to imaginary quadratic fields by the Kuznetsov--Vorono\"i approach rather than the Kuznetsov\allowbreak--Motohashi formula (such a formula is currently not available over imaginary quadratic fields). It naturally drives us to  revisit the problem of Ivi\'c for the cubic moment from this perspective.    

The study of cubic moment for $\GL_2$ via the (Petersson--Kuznetsov) trace formula and the (triple Poisson) summation formula was initiated in  the groundbreaking work of Conrey and Iwaniec \cite{CI-Cubic}. Their focus is on the $q$-aspect, and, in the most simplified setting, their result in the Maass-form case reads as follows: 
\begin{align*}
	\sum_{t_f  \leqslant T}   L \big( \tfrac 1 2 , f \times \chi_q \big)^3 +    \int_{\shskip 0}^{T}          {\left| L  \big(\tfrac 1 2 + it, \chi_q \big) \right|^{6} }  
	\shskip   \nd \shskip t \Lt_{T, \vepsilon} q^{1+\vepsilon}, 
\end{align*}
where $\chi_q$ is the quadratic character of square-free conductor $q$.  

For the spectral aspect, novel ideas were introduced by Xiaoqing Li \cite{XLi2011} in her study of the first moment for  $\GL_3 \times \GL_2$,  and by Young \cite{Young-Cubic} for his hybrid version of Conrey and Iwaniec's results: the former used the (Vorono\"i) summation formula twice, while the latter used the hybrid large sieve after the (triple Poisson) summation formula.  
Later, Nunes \cite{Nunes-GL3} used Young's idea to improve Xiaoqing Li's subconvexity bounds for $\GL_3$, replacing the second Vorono\"i by the large sieve, and the author \cite{Qi-GL(3)} extended his results to arbitrary number fields.  

Our approach, simply speaking,  combines those of Xiaoqing Li and Young: After Kuznetsov, use Vorono\"i+Poisson, the second Vorono\"i, and finally the large sieve. The use of Vorono\"i+Poisson instead of the triple Poisson is beneficial for us to see the main term (see \cite{Qi-Liu-Moments} and the opening discussions in \S \ref{sec: setup}).  
Similar ideas with the second Vorono\"i replaced by the $\GL_3$ functional equation were used in the recent joint work \cite{LN-Qi-GL(3)} to get strong subconvexity bounds for $\GL_3$.

\subsection{Remarks}  
 

The study of the $2k$-th moment   of      $\zeta \big(\frac 1 2 +it\big)$  
is quite a fascinating story  on its own. More explicitly, the moment conjecture for $\zeta \big(\frac 1 2 +it\big)$ reads 
\begin{align*}
 	\int_{0}^T \left| \zeta  \big(\tfrac 1 2 + it \big) \right|^{2 k} \nd \shskip t = T P_{k^2} (\log T) + E_k (T),
\end{align*}
with $P_{k^2} $ a certain polynomial of degree $k^2$ and 
\begin{align*}
	E_k (T) = O_{\vepsilon} (T^{1/2+\vepsilon}). 
\end{align*} See  \cite[\S 1.3]{Conrey-FKRS-Moments}. Along with the standard bound $ 1/|\zeta(1+it)| \Lt \log t $ ($t > 3$) (see \cite[Theorem 5.17]{Titchmarsh-Riemann}), the moment conjecture for $\zeta \big(\frac 1 2 +it\big)$ implies that  in the asymptotic formula given by \eqref{1eq: truncated moments} and \eqref{0eq: Ivic conjecture, strong} the  integral  could be absorbed into the error term $ O (T^{1+\vepsilon})$. However, this can be done unconditionally only for $k = 1, 2$.\footnote{The currently best estimates for $E_1(T)$ and $E_2(T)$ are $O \big(T^{ {1515} / {4816}+\vepsilon} \big)$ and $O \big(T^{  2/3} \log^C T\big)$ ($C$ is an effective constant) due to   Bourgain--Watt \cite{BW-Riemann-2} and  Ivi\'c--Motohashi \cite{IM-4th-Moment}. For $k=6$, Heath-Brown \cite{Heath-Brown-12th} proved that the 12th moment integral is $O \big(T^2 \log^{17} T \big)$. For $k = 3$ and $4$, the best bounds are $ O \big(T^{  5 / 4} \log^{  {37} / 4} T\big)$ and $ O \big(T^{  3 / 2} \log^{  {21} / 2} T\big)$ (see \cite{Ivic-Moment-1}), which are trivial consequences of Heath-Brown's bound and the H\"older (or Cauchy--Schwarz) inequality.}

Our analysis suggests that the error term for the smooth {\it cubic} moment $\SM_3^{\snatural} (T, M) $ is connected to the {\it fourth} moment of $\zeta \big(\frac 1 2 + it\big)$ up to the height $U = T/M$ (see \S \ref{sec: apply large sieve}), for which the bound $O (U^{1+\vepsilon})$ is known as indicated above, so Theorem \ref{thm: T and M} is probably optimal. It is interesting to see whether a lower order term can be extracted from the error term. 

The heuristic connection above should be regarded as the inverse of the Motohashi formula (see \cite{Motohashi-Riemann-4th,Motohashi-Riemann}), and it can be realized in explicit terms as the spectral moment formula of Chung-Hang  Kwan  \cite{Kwan} or the sepctral reciprocity formula in the on-going work of Humphries and Khan.  However, in the $q$-aspect,   this Motohashi-type connection was already visible in the work of Conrey and Iwaniec \cite{CI-Cubic}, and it has been implemented by many successors \cite{Michel-Venkatesh-GL2,Petrow-Cubic,PY-Weyl2,PY-Weyl3,Nelson-Eisenstein,Wu-Motohashi,B-F-Wu-Weyl}. 

Finally, we remark that  there seems to be substantial analytic obstacles (see   Remark \ref{rem: obstacle} and \ref{rem: no saving}) that prevent  us from obtaining  asymptotic over any number field other than $\BQ$.

\section*{Acknowledgments}
The author thanks Peter Humphries, Yongxiao Lin, Ramon M. Nunes for helpful discussions.

\section{Preliminaries} 

\subsection{Bessel Kernel} Bessel functions (as in \cite{Watson}) arise in both the Kuznetsov trace formula and the Vorono\"i summation formula. To make our exposition succinct, we introduce the Bessel kernel $B_s (x)$ defined as follows
\begin{equation}
\begin{aligned}
		&B_{s} (x) \hskip -1pt  = \hskip -1pt \frac {\pi} {\sin (\pi s) } \big( J_{-2 s} (4 \pi \hskip -1pt \sqrt {x }) \hskip -1pt - \hskip -1pt J_{2 s} (4 \pi \hskip -1pt \sqrt {x })   \big), \hskip -3pt \quad B_{s} (-x )    \hskip -1pt
	=  \hskip -1pt {4 \cos (\pi s)}    K_{2 s} (4 \pi \hskip -1pt \sqrt {x }) ,
\end{aligned}
\end{equation} 
for real $x > 0$ and complex $s$. 

For  $t$ real, the Bessel kernel $B_{it} (x)$  appears in a certain Bessel integral $\SDH (x)$ in Kuznetsov which is well understood by the works of \cite{Ivic-Jutila-Moments,XLi2011,Young-Cubic,Qi-Liu-LLZ}. Moreover, the Bessel kernel  $B_0 (x)$ arises in the Hankel transform in Vorono\"i, and the following formulae will be crucial in our analysis:
\begin{align} 
	\label{4eq: asymptotic, R+}		& B_0 (x) =   \sum_{ \pm} \frac {e (\pm (2 \sqrt{x} + 1/8))} {  \ds \sqrt[4]{x } } W_0 (\pm \sqrt{  x}) , \qquad B_0 (-x) = O  \bigg( \frac {\exp (-4\pi \sqrt{x})} {\ds \sqrt[4]{x  }} \bigg),  
\end{align}
for $x > 1$, in which  $x^j  W_0^{(j)} (x)  \Lt_{j} 1$ (see \cite[\S 6]{Qi-Liu-Moments}).

\subsection{Kuznetsov Trace Formula}

Let $\SB$ be an orthonormal basis of even Hecke--Maass forms for $\SL_2 (\BZ)$. For $f (z) \in \SB$ let  $\frac 1 4 + t_f^2$ ($t_f \geqslant 0$) be its Laplacian eigenvalue, $\lambda_f (n)$ ($n \geqslant 1$) be its Hecke eigenvalues, and $\rho_f (n)$ ($n \neq 0$) be its Fourier coefficients. By definition, $f (z)$ is even in the sense that $ f (- \widebar{z}) =    f (z)$, so that $\rho_f (-n) = \rho_f (n) $. It is known that $\rho_f (\pm n) = \rho_f (1) \lambda_f (n)$. 

Now we recall the Kuznetsov trace formula for even Maass forms. See \cite[\S 2]{XLi2011} and \cite[\S 3.3]{Qi-Liu-Moments}. 

\begin{lem} \label{lem: Kuznetsov}
	Let $h(t)$ be an even test function such that
\begin{itemize}
	\item [(1)] $h(t)$ is holomorphic in $|\mathrm{Im} (t) |\leqslant \frac 1 2 +\vepsilon$, and
	\item [(2)] $h(t) \Lt (|t|+1)^{-2-\vepsilon}$ in the above strip. 
\end{itemize}
Then for   $n_1, n_2 \geqslant 1$ we have 
\begin{align}
\begin{aligned}
	  \sum_{f \in \SB} h (t_f) \omega_f \lambda_f (n_1) \lambda_f (n_2) & + \frac 1 {4\pi} \int_{-\infty}^{\infty} h(t) \omega (t) \tau_{it} (n_1) \tau_{it} (n_2) \nd \shskip t \\
	& =  \delta_{n_1, n_2} \SDH +   \sum_{ \pm} \sum_{c=1}^{\infty} \frac {S (n_1, \pm n_2; c)} {c}  \SDH \bigg( \hskip -1pt \pm \frac {n_1 n_2} {c^2} \bigg), 
\end{aligned}
\end{align}
where $\delta_{n_1, n_2}$ is   Kronecker's $\delta$-symbol, 
\begin{align}
	\tau_{s} (n) = \tau_{-s} (n) = \sum_{ab=n} (a/b)^s, 
\end{align}
\begin{align}
	\omega_f = \frac {|\rho_f (1)|^2} {\cosh (\pi t_f)} = \frac {2} {L(1, \mathrm{Sym^2} f)},   \qquad \omega (t) = \frac {4 } {|\zeta (1+2it)|^2 }, 
\end{align}
and
\begin{align}\label{2eq: integral H}
	\SDH  = \frac {1} {2 \pi^2} \int_{-\infty}^{\infty} h(t)    {  \tanh (\pi t) t \shskip \nd \shskip t }    ,  \qquad
	\SDH (x) = \frac 1 {2\pi^2} \int_{-\infty}^{\infty} h(t) B_{it} (x)   \tanh (\pi t) t  \shskip \nd \shskip t .
\end{align} 
\end{lem}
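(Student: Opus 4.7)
The plan is to derive this even-form Kuznetsov trace formula from the standard sign-distinguished Kuznetsov formula (applied to an orthonormal basis containing both even and odd Hecke--Maass forms for $\SL_2(\BZ)$) by symmetrizing over the sign of $n_2$. First I would invoke the classical identities for $n_1, n_2 \geqslant 1$ and for each sign separately; schematically, for the ``$+$'' case,
\begin{align*}
\sum_{f} h(t_f) \frac{\rho_f(n_1)\overline{\rho_f( n_2)}}{\cosh(\pi t_f)} + (\mathrm{Eis})_{+} = \delta_{n_1, n_2}\SDH + \sum_{c \geqslant 1} \frac{S(n_1,  n_2; c)}{c}  \SDH_{+}\bigg(\frac{n_1 n_2}{c^2}\bigg),
\end{align*}
and similarly for the ``$-$'' case with $n_2 \to -n_2$ and no diagonal term. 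Here $\SDH_{+}$ and $\SDH_{-}$ are the $J$-Bessel and $K$-Bessel transforms of $h$ against $\tanh(\pi t)\, t\, dt$, respectively. These identities come from the classical Petersson--Kuznetsov machine: expanding a Poincar\'e series both spectrally via Parseval against the basis and geometrically via the Bruhat decomposition of $\SL_2(\BZ)$.

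Next, I exploit the parity $\rho_f(-n) = \epsilon_f \rho_f(n)$ with $\epsilon_f = \pm 1$ to isolate even forms. Summing the $+$ and $-$ identities yields $(1 + \epsilon_f)\rho_f(n_1)\overline{\rho_f(n_2)}$ on the spectral side, which doubles on even $f \in \SB$ and annihilates the odd part of the basis. The Eisenstein contribution is automatically even since $\tau_{it}(-n) = \tau_{it}(n)$, so its two sign-versions combine additively into the claimed integral with weight $\omega(t) = 4/|\zeta(1+2it)|^2$ and divisor product $\tau_{it}(n_1)\tau_{it}(n_2)$. Applying the Hecke relation $\rho_f(n) = \rho_f(1)\lambda_f(n)$ converts $|\rho_f(1)|^2/\cosh(\pi t_f)$ into the harmonic weight $\omega_f = 2/L(1, \mathrm{Sym}^2 f)$, producing $\omega_f \lambda_f(n_1)\lambda_f(n_2)$ as desired.

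On the geometric side, the two Bessel transforms $\SDH_{\pm}(x)$ package into $\SDH(\pm x)$ via the single Bessel kernel $B_{it}$ defined at the start of the section: that is exactly the content of the two formulas $B_{it}(x) = (\pi/\sin(\pi it))(J_{-2it} - J_{2it})(4\pi\sqrt{x})$ and $B_{it}(-x) = 4\cos(\pi it)K_{2it}(4\pi\sqrt{x})$ combined with the measure $\tanh(\pi t)\,t\,dt$. The diagonal $\delta_{n_1, n_2}\SDH$ appears only from the $+$ identity and passes through unchanged, yielding the stated $\SDH = (2\pi^2)^{-1}\int h(t)\tanh(\pi t)\,t\,dt$.

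The hard part is really just bookkeeping: correctly tracking factors of $2$, $\pi$, and $1/(4\pi)$ so that the final normalization matches the statement, and verifying absolute convergence. The analytic hypotheses---$h$ holomorphic in $|\Im t| \leqslant \tfrac{1}{2} + \vepsilon$ and $h(t) \Lt (|t|+1)^{-2-\vepsilon}$---guarantee convergence of the spectral sum, the Eisenstein integral, and the Kloosterman series via the uniform bounds on $B_{it}$ recalled in the preceding subsection together with Weil's estimate $S(m,n;c) \Lt c^{1/2+\vepsilon}(m,n,c)^{1/2}$. A complete execution in this exact normalization is carried out in \cite{XLi2011} and \cite{Qi-Liu-Moments}, which we simply follow.
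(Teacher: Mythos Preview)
Your proposal is correct and matches the paper's treatment: the paper does not prove this lemma but simply recalls it with citations to \cite[\S 2]{XLi2011} and \cite[\S 3.3]{Qi-Liu-Moments}, exactly the references you invoke. Your additional sketch of the parity-symmetrization argument is the standard derivation and is consistent with those sources.
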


\subsection{Poisson and Vorono\"i Summation Formulae}

The following Poisson formula is essentially a special case of \cite[(4.25)]{IK}. 
\begin{lem}\label{lem: Poisson} Let $\varww \in C_c^{\infty} (-\infty, \infty)$. Let $a, c $ be integers with $c \geqslant 1$. Then
	\begin{align}
		\sum_{n } e \lp - \frac {a n} c \rp \varww (n) & =   \sum_{m \shskip \equiv \shskip a (\mod c) }  \widehat {\varww} \Big(  \frac {m} {c} \Big)  ,
	\end{align}
where $ \widehat {\varww} $ is the Fourier transform of $\varww$ defined by
\begin{align}
\widehat {\varww} (y) =	\int_{-\infty}^{\infty} \varww (x) e  ( - x y)   \nd x .
\end{align}
\end{lem}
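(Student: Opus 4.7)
The plan is to reduce this additively twisted sum to an application of the classical Poisson summation formula, exploiting the fact that the twist $e(-an/c)$ depends only on $n$ modulo $c$. First I would decompose the summation variable $n \in \BZ$ according to its residue class modulo $c$: writing $n = r + ck$ with $r$ running over a set of representatives of $\BZ/c\BZ$ and $k \in \BZ$, the twist becomes constant along each class, giving
\begin{align*}
\sum_{n} e\lp - \frac{an}{c} \rp \varww(n) = \sum_{r \shskip (\mod c)} e\lp - \frac{ar}{c} \rp \sum_{k \in \BZ} \varww(r + ck).
\end{align*}

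Next I would apply the standard Poisson summation formula---obtained by Fourier-expanding the $c$-periodic function $r \mapsto \sum_{k} \varww(r+ck)$ and identifying its Fourier coefficients with values of $\widehat{\varww}$---to the inner sum, producing
\begin{align*}
\sum_{k \in \BZ} \varww(r+ck) = \frac{1}{c} \sum_{m \in \BZ} \widehat{\varww}\lp \frac{m}{c} \rp e\lp \frac{mr}{c} \rp.
\end{align*}
Substituting this back and exchanging the order of the finite sum over $r$ with the sum over $m$---permissible since $\varww \in C_c^{\infty}$ forces every sum in sight to be absolutely convergent---yields
\begin{align*}
\frac{1}{c} \sum_{m \in \BZ} \widehat{\varww}\lp \frac{m}{c} \rp \sum_{r \shskip (\mod c)} e\lp \frac{r(m-a)}{c} \rp.
\end{align*}

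Finally I would evaluate the inner sum as a complete additive character sum on $\BZ/c\BZ$: by orthogonality it equals $c$ precisely when $m \equiv a \shskip (\mod c)$ and vanishes otherwise, so the factor $c$ cancels the prefactor $1/c$ and the sum over $m$ is restricted to $m \equiv a \shskip (\mod c)$, which is exactly the claim. There is no genuine obstacle here; the identity is a one-line invocation of classical Poisson summation combined with orthogonality of additive characters modulo $c$, and the only reason to record the formula in this precise form is that the congruence condition on $m$ will interact cleanly with the Kloosterman sums produced by Lemma~\ref{lem: Kuznetsov} in the subsequent cubic-moment analysis.
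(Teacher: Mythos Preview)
Your argument is correct and is exactly the standard derivation: split into residue classes modulo $c$, apply classical Poisson on each class, and collapse via orthogonality of additive characters. The paper itself does not give a proof of this lemma but simply cites it as a special case of \cite[(4.25)]{IK}, so there is nothing to compare against beyond noting that your write-up is the routine unpacking of that reference.
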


Note that there is no zero frequency in the case that $c > 1$ and $(a, c) = 1$. 

The following Vorono\"i   formula for the  divisor function $\tau (n) = \tau_0 (n)$ is from \cite[(4.49)]{IK}. Note that  $B_0 (x)  =  - 2\pi   Y_{0} (4 \pi \sqrt {x }) $  for $x > 0$ by \cite[3.54 (1)]{Watson}. 

\begin{lem}\label{lem: Voronoi} Let $\varww \in C_c^{\infty} (0, \infty)$. Let $a, \widebar{a},  c $ be integers with $c \geqslant 1$ and $a \widebar{a} \equiv 1 (\mod c)$. Then
	\begin{align}\label{app: Voronoi}
		\begin{aligned}
		c	\sum_{n } \tau (n) e \hskip -1pt  \lp -  \frac {a n} c \rp \hskip -1pt \varww (n)   =   2( \gamma   -    \log c) \widetilde{\varww}_0 (0)   +   \widetilde{\varww}_0' (0)   +  \hskip -1pt  \sum_{m \neq 0} \hskip -1pt   \tau (m) e \Big(   \frac {\widebar{a} m} c \Big)   \widetilde{\varww}_{0} \Big(\frac {m} {c^2} \Big), 
		\end{aligned}
	\end{align}
where $\gamma$ is Euler's constant, 
\begin{align}\label{3eq: Mellin}
	\widetilde{\varww}_0 (0) =	 \int_{0 }^{\infty}  \varww (x)   \nd x , \qquad \widetilde{\varww}_0' (0) =	 \int_{0 }^{\infty}  \varww (x)    \log x \shskip \nd x ,
\end{align}
and $ \widetilde{\varww}_{0}  $ is the
 Hankel transform of $ \varww $ {\rm(}with kernel $B_0${\rm)} defined by
\begin{align}\label{3eq: Hankel, global}
	\widetilde{\varww}_{0} (y) =    \int_{0}^{\infty}  \varww (x) B_{0}    ( x y)   \nd x, 
\end{align} 
for real $y \neq 0$. 
\end{lem}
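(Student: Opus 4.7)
The plan is to derive this Vorono\"i formula from Mellin inversion combined with the analytic continuation and functional equation of the additively twisted Estermann zeta function
\begin{equation*}
D(s, a/c) = \sum_{n=1}^{\infty} \frac{\tau(n) e(- a n/c)}{n^s}, \qquad \Re(s) > 1.
\end{equation*}
Writing $\widetilde{\varww}(s) = \int_0^\infty \varww(x) x^{s-1} \nd x$ for the Mellin transform of $\varww$, which is entire and of rapid decay on vertical lines since $\varww \in C_c^\infty(0, \infty)$, Mellin inversion on the line $\Re(s) = 2$ yields
\begin{equation*}
\sum_{n} \tau(n) e(-a n/c) \varww(n) = \frac{1}{2\pi i} \int_{(2)} \widetilde{\varww}(s) D(s, a/c) \shskip \nd s .
\end{equation*}

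The key analytic input is that $D(s, a/c)$ extends meromorphically to $\BC$ with a unique double pole at $s = 1$ of principal part $c^{-1} (s-1)^{-2} + 2 c^{-1}(\gamma - \log c)(s-1)^{-1}$, and satisfies a functional equation of the form
\begin{equation*}
D(s, a/c) = 2 (2\pi)^{2s-2} c^{1-2s} \Gamma(1-s)^2 \sum_{\pm} \Phi_\pm(s) \shskip D(1-s, \pm \overline{a}/c),
\end{equation*}
with appropriate trigonometric factors $\Phi_{\pm}(s)$. Both facts can be derived by decomposing $e(-an/c)$ along Dirichlet characters modulo divisors of $c$ and invoking the functional equations of the underlying $L$-functions, or equivalently via Poisson summation as in Iwaniec--Kowalski \S 4.5.

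I would then shift the contour from $\Re(s) = 2$ to $\Re(s) = -\vepsilon$. The residue at the double pole at $s = 1$ contributes exactly $c^{-1}[\shskip 2(\gamma - \log c) \widetilde{\varww}_0(0) + \widetilde{\varww}_0'(0)\shskip]$, since the Taylor coefficients $\widetilde{\varww}(1)$ and $\widetilde{\varww}'(1)$ are precisely $\int_0^\infty \varww(x) \nd x$ and $\int_0^\infty \varww(x) \log x \nd x$. On the shifted line I would apply the functional equation, interchange summation and integration (justified by absolute convergence of the dual Dirichlet series for $\Re(s) < 0$), and recognize each inner integral
\begin{equation*}
\frac{1}{2\pi i} \int_{(-\vepsilon)} \widetilde{\varww}(s) (2\pi)^{2s-2} \Gamma(1-s)^2 \Phi_\pm(s) (|m|/c^2)^{s-1} \nd s
\end{equation*}
as $c^{-2}$ times $\widetilde{\varww}_0(\pm |m|/c^2)$, by matching against the Mellin--Barnes representations of $B_0(x) = -2\pi Y_0(4\pi\sqrt{x})$ and $B_0(-x) = 4 K_0(4\pi\sqrt{x})$.

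The hard part will be the clean derivation of the functional equation of $D(s, a/c)$, keeping careful track of the Gauss sums and the phase factors that distinguish the two summands $\pm \overline{a}/c$; these must combine so that the dual series reassembles into the single symmetric sum $\sum_{m \neq 0} \tau(m) e(\overline{a} m/c) \widetilde{\varww}_{0}(m/c^2)$ of the statement, with the sign of $m$ encoding the $Y_0$ versus $K_0$ parts of the Bessel kernel $B_0$. The compact support of $\varww$ reduces all convergence issues to cosmetic verifications, so beyond the functional equation the remaining work is bookkeeping.
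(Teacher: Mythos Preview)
Your outline is correct and is the standard derivation of the Vorono\"i formula for $\tau(n)$ via the Estermann zeta function. Note, however, that the paper does not supply its own proof of this lemma: it simply quotes the formula from Iwaniec--Kowalski \cite[(4.49)]{IK}, together with the identification $B_0(x) = -2\pi Y_0(4\pi\sqrt{x})$ for $x>0$. Your Mellin--functional-equation argument is precisely the approach behind that reference, so there is nothing to compare; you are filling in what the paper takes as a black box.
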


\subsection{Approximate Functional Equations} 
	According to \cite[\S 4]{Qi-Liu-Moments}, with slightly altered notation, we have the following approximate functional equations:
\begin{equation}
	\label{5eq: AFE, 1} 
	L \big(\tfrac 1 2,    f \big)   =  2 \sum_{n  =1}^{\infty}  \frac {   \lambda_f  (n  )  } {\sqrt{n}  }     V_1   (     n    ; t_f   )   , \qquad L \big(\tfrac 1 2,    f \big)^2    =   2   \sum_{n  =1}^{\infty}    \frac {   \lambda_f  (n  ) \tau (n ) } { \sqrt{n}  }     V_2   ( n ; t_f   ),    
\end{equation}
and similarly
\begin{equation}
	\label{5eq: AFE, 2} 
	  \begin{split}
	  	& \left| \zeta \big(\tfrac 1 2 + it \big) \right|^2     =  2 \sum_{n  =1}^{\infty}  \frac {   \tau_{it}  (n  )  } {\sqrt{n}  }     V_1   ( n ; t   ) + O  \big( e^{-t^2/2} \big) , \\
	  	& \left| \zeta \big(\tfrac 1 2 + it \big) \right|^4     =   2   \sum_{n  =1}^{\infty}   \frac {   \tau_{it}  (n  ) \tau (n ) } { \sqrt{n} }     V_2   ( n ; t  )    +   O  \big( e^{-t^2  } \big) , 
	  \end{split}
\end{equation}
with 
\begin{equation}\label{5eq: def of V1 (y, t)} 
	V_1 (y; t ) \hskip -1pt = \hskip -1pt \frac 1  {2 \pi i} \hskip -1pt  \int_{(3)} \hskip -2pt 
	\delta (v, t) e^{v^2  }   y^{ -  v} \frac { \nd   v } {v} , \quad \hskip -1pt
	V_2 (y; t ) \hskip -1pt = \hskip -1pt \frac 1  {2 \pi i} \hskip -1pt \int_{(3)} \hskip -2pt \zeta (1 \hskip -1pt + \hskip -1pt 2v) 
	\delta (v, t)^2  e^{ 2 v^2  } y^{ - v} \frac { \nd v } {v} ,   
\end{equation}
for $y > 0$, where 
\begin{align}\label{4eq: def G}
	\delta (v, t) = \frac {\gamma \big(\frac 1 2 + v , t \big)  }  {\gamma \big(\frac 1 2, t   \big)   }     , 
\end{align}   
and 
	\begin{equation}\label{4eq: defn of gamma (s, f)}
		\gamma  (s, t) =   \pi ^{-   s } \Gamma \bigg(\frac { s- i t } 2 \bigg)   \Gamma \bigg(\frac { s+ i t } 2 \bigg) .
	\end{equation}  


\begin{lem}\label{lem: afq}  
For real $t$ define 
\begin{align*}
	\RC  (t) =        \sqrt{\frac 1 4 +  t^2}   .
\end{align*} 	Let  $ U  > 1 $. We have
	\begin{align}\label{2eq: derivatives for V(y, t), 1}
		V_1 (y; t ) \Lt_{ A } 
		\bigg(  1 + \frac {y} { \RC  (t)  } \bigg)^{-A} , \quad V_2 (y; t ) \Lt_{ A  } 
		\bigg(  1 + \frac {y} {\RC  (t)^{2 } } \bigg)^{-A},
	\end{align}   
	\begin{align}
		\label{1eq: approx of V1}
		\begin{split}
			V_1  (y; t) & = \frac 1 {2   \pi i } \int_{ \vepsilon - i U}^{\vepsilon + i U}  \delta (v, t)  e^{v^2  }  y^{ - v}   \frac {\nd v} {v} + O_{\vepsilon } \bigg( \frac {\RC  (t)^{  \vepsilon} } {y^{ \vepsilon} e^{U^2 / 2} } \bigg),  \\ 
		V_2  (y; t) & = \frac 1 {2   \pi i } \int_{ \vepsilon - i U}^{\vepsilon + i U}  \delta (v, t)^2 \zeta (1+2v) e^{2 v^2  }   y^{ - v}   \frac {\nd v} {v} + O_{\vepsilon } \bigg( \frac {\RC  (t)^{  \vepsilon} } {y^{ \vepsilon} e^{U^2   } } \bigg) ,
		\end{split} 
	\end{align}   
and, furthermore, if  $ |t| \Gt U^2 $ and $v$ is on the contour, then we may write $\delta (v, t) = (t/2\pi)^{v} (1 + \delta^{\flat} (v, t) )$ so that 
\begin{align}\label{2eq: delta(t)=} 
	 \frac {\partial^i \delta^{\flat} (v, t) } {\partial t^i } \Lt_{i } \frac {U^2} {|t|^{i+1}}  .
\end{align}  
Finally, for $1 \leqslant y <  \RC (t)^{2}$ we have 
\begin{align}\label{4eq: asymptotic for V2}
	V_2 (y; t) = \gamma  +  \psi_1 (t)  -  \log   \sqrt{y}        +  O_{A }   \lp \lp \frac {y}  {\RC  (t)^{2 }} \rp \hskip -8.5 pt {\phantom{\Big)}}^{A } \rp ,
\end{align} 
with $\psi_1 (t) = \lp \partial \log \gamma (s, t) /\partial s \rp|_{s=1/2}$. 
\end{lem}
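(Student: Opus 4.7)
The plan is to establish the four parts of Lemma~\ref{lem: afq} by shifting the Mellin--Barnes contour in \eqref{5eq: def of V1 (y, t)} and applying Stirling's asymptotic to $\gamma(s,t)$. For the decay bounds \eqref{2eq: derivatives for V(y, t), 1}, I would move the contour from $\Re v = 3$ to $\Re v = A$ for any fixed $A > 0$; no residues are crossed since $e^{v^2}/v$ and $\zeta(1+2v)$ are holomorphic for $\Re v \geqslant \vepsilon$. On the line $v = A + i\tau$, Stirling's formula gives $|\delta(v,t)| \Lt \RC(t)^{A}$ (up to polynomial factors in $1 + |\tau|/\RC(t)$), and the Gaussian factor $|e^{v^2}| = e^{A^2 - \tau^2}$ makes the integral absolutely convergent; collecting the powers of $\RC(t)$ and $y$ yields $V_1(y;t) \Lt_A (\RC(t)/y)^A$, and analogously $V_2(y;t) \Lt_A (\RC(t)^2/y)^A$ since $|\delta(v,t)^2| \asymp \RC(t)^{2A}$. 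Combined with the trivial estimate $V_j(y;t) = O(1)$ coming from absolute bounds on the line $\Re v = \vepsilon$, this yields \eqref{2eq: derivatives for V(y, t), 1}.

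For the truncation \eqref{1eq: approx of V1}, I would first shift both contours from $\Re v = 3$ to $\Re v = \vepsilon$ (no poles are crossed for $V_1$, and the pole of $\zeta$ at $v = 0$ is also avoided for $V_2$ since $\vepsilon > 0$), and then estimate the tails $|\tau| \geqslant U$. Using $|\delta(\vepsilon + i\tau, t)| \Lt \RC(t)^{\vepsilon}(1+|\tau|)^{O(\vepsilon)}$ and $|e^{v^2}| = e^{\vepsilon^2 - \tau^2}$, the routine Gaussian tail bound $\int_{U}^{\infty} e^{-\tau^2}\,\nd\tau \Lt e^{-U^2}/U$ gives the stated error $O(\RC(t)^\vepsilon/(y^\vepsilon e^{U^2/2}))$ for $V_1$. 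For $V_2$, the presence of $|e^{2v^2}| = e^{2\vepsilon^2 - 2\tau^2}$ produces the sharper $e^{U^2}$ denominator, the factor $|\zeta(1+2v)| \Lt (1+|\tau|)^{\vepsilon}$ on $\Re v = \vepsilon$ being absorbed harmlessly.

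For the Stirling refinement \eqref{2eq: delta(t)=}, I would apply the standard asymptotic expansion of $\log \Gamma$ to each of the two factors in $\gamma(1/2+v, t)/\gamma(1/2, t) = \pi^{-v}\prod_{\pm}\Gamma((1/4 + v/2) \pm it/2)/\Gamma(1/4 \pm it/2)$. In the regime $|t| \Gt U^2$ with $v$ on the truncated contour $\Re v = \vepsilon$, $|\Im v| \leqslant U$, Stirling gives each such ratio as $(\pm it/2)^{v/2}(1 + O(|v|^2/|t|))$; the two phases combine so that the product equals $(t/2)^v(1 + O(U^2/|t|))$, and incorporating $\pi^{-v}$ yields $\delta(v,t) = (t/2\pi)^v(1 + \delta^\flat(v,t))$ with $\delta^\flat(v,t) \Lt U^2/|t|$. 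Differentiation with respect to $t$ in the Stirling series contributes one extra factor of $1/|t|$ per derivative, producing \eqref{2eq: delta(t)=}.

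The asymptotic \eqref{4eq: asymptotic for V2} is extracted by shifting the $V_2$-contour from $\Re v = 3$ leftwards past $v = 0$ to $\Re v = -A$ for any fixed $A > 0$. The only singularity crossed is a \emph{double} pole at $v = 0$: the simple pole of $\zeta(1+2v) = 1/(2v) + \gamma + O(v)$ combines with the existing factor $1/v$. Using the Taylor expansions $\delta(v,t)^2 = 1 + 2v\psi_1(t) + O(v^2)$, $e^{2v^2} = 1 + O(v^2)$, and $y^{-v} = 1 - v \log y + O(v^2)$, the coefficient of $v^0$ in the Laurent series of the integrand reads off as $\gamma + \psi_1(t) - \log\sqrt{y}$, giving the main term. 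The residual integral on $\Re v = -A$ is bounded, as in part (a), by $(y/\RC(t)^2)^A$, thanks to $|\delta(v,t)^2| \asymp \RC(t)^{-2A}$ on this line; the hypothesis $1 \leqslant y < \RC(t)^2$ makes this error genuinely negligible after choosing $A$ large. The main delicacy throughout is keeping track of uniformity in $t$ (and in $v$ on the truncated contour) in the Stirling estimates, but no ideas beyond standard analytic-number-theoretic machinery are required.
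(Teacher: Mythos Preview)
Your proposal is correct and follows precisely the standard route that the paper defers to via its citations (Iwaniec--Kowalski Prop.~5.4, Blomer Lemma~1, and Lemma~4.1 of \cite{Qi-Liu-Moments}): contour shifts combined with Stirling for parts \eqref{2eq: derivatives for V(y, t), 1}--\eqref{2eq: delta(t)=}, and a residue calculation at the double pole $v=0$ for \eqref{4eq: asymptotic for V2}.

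One small inaccuracy in your treatment of \eqref{4eq: asymptotic for V2}: the claim that ``the only singularity crossed is a double pole at $v=0$'' is not literally true. The factor $\delta(v,t)^2$ inherits poles from $\Gamma\big(\tfrac{1}{4}+\tfrac{v}{2}\pm\tfrac{it}{2}\big)$ at $v=-\tfrac{1}{2}-2n\mp it$ for $n\geqslant 0$, and these lie in the strip $-A<\Re v<0$ once $A>\tfrac12$. However, at each such pole the Gaussian weight satisfies $|e^{2v^2}|=e^{2(1/2+2n)^2-2t^2}$, so the corresponding residues are $O_A(e^{-2t^2})$; since $1\leqslant y<\RC(t)^2$ forces $(y/\RC(t)^2)^A\geqslant \RC(t)^{-2A}$, these extra residues are absorbed into the stated error term. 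With that caveat noted, your argument is complete.
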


\begin{proof} 
	\eqref{2eq: derivatives for V(y, t), 1} and \eqref{1eq: approx of V1} are essentially  from  \cite[Proposition 5.4]{IK} and \cite[Lemma 1]{Blomer}; see also \cite[Lemma 4.1 (1)]{Qi-Liu-Moments}. \eqref{2eq: delta(t)=}  follows readily from Stirling's formulae for $\log \Gamma$ and its derivatives (see for example \cite[\S \S 1.1, 1.2]{MO-Formulas}). \eqref{4eq: asymptotic for V2} is from \cite[Lemma 4.1 (2)]{Qi-Liu-Moments}.
\end{proof}

\subsection{The Large Sieve} \label{sec: large sieve}

The following   is  Gallagher's large sieve inequality  \cite[Theorem 2]{Gallagher-LS} in the case $q=1$.

\begin{lem}\label{lem: large sieve}
	Let $a_n$ be a sequence of complex numbers.	For $T > 1$ we have
	\begin{align}
		\int_{-T}^T \raisebox{- 0.15 \depth}{{$\bigg|$}}\sum_{n} a_n n^{it} \raisebox{- 0.15 \depth}{$\bigg|$}^2 d t \Lt \sum_{n} (T + n) |a_n|^2, 
	\end{align}
	provided that the sum of $|a_n|$ is bounded. 
\end{lem}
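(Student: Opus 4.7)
The approach is Gallagher's classical smoothing argument: replace the sharp cutoff on $[-T, T]$ by a smooth majorant and then exploit Fourier duality. Concretely, I would choose an even Schwartz function $\phi : \mathbb{R} \to [0, \infty)$ with $\phi(t) \geqslant 1$ on $[-1, 1]$ whose Fourier transform $\widehat{\phi}$ is non-negative and of rapid decay (for example, take any smooth non-negative compactly supported $\psi$ and set $\phi = \psi * \psi$ after rescaling). Then
\begin{align*}
\int_{-T}^{T} \bigg| \sum_{n} a_n n^{it} \bigg|^2 \nd t \leqslant \int_{-\infty}^{\infty} \phi(t/T) \bigg| \sum_{n} a_n n^{it} \bigg|^2 \nd t .
\end{align*}

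Next I would expand the square, interchange sum and integral (legitimised by the hypothesis that $\sum_{n} |a_n|$ is bounded), and perform the $t$-integration by Fourier inversion; this recasts the right-hand side as
\begin{align*}
T \sum_{m, n} a_m \overline{a_n} \, \widehat{\phi}\big( \tfrac{T}{2\pi} \log (m/n) \big).
\end{align*}
The diagonal $m = n$ contributes $T \widehat{\phi}(0) \sum_{n} |a_n|^2 \Lt T \sum_{n} |a_n|^2$, which is already absorbed by the target bound. For the off-diagonal, I would symmetrise via AM--GM, $2 |a_m \overline{a_n}| \leqslant |a_m|^2 + |a_n|^2$, and fix $n$. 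Since $\widehat{\phi}$ decays rapidly, the significant contribution comes from $m$ with $|T \log(m/n)| = O(1)$, i.e., $|m - n| = O(1 + n/T)$; the number of such integers is $O(1 + n/T)$, so the off-diagonal total is
\begin{align*}
\Lt T \sum_{n} |a_n|^2 \left( 1 + \frac{n}{T} \right) = \sum_{n} (T + n) |a_n|^2,
\end{align*}
matching the right-hand side of the lemma.

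The only real technical point is the construction of $\phi$ with a non-negative Fourier transform together with the sign condition $\phi \geqslant 1$ on $[-1, 1]$. This is classical---any convolution square $\phi = \psi * \psi$ of a suitable non-negative $\psi$ works, and one may also invoke the Beurling--Selberg extremal construction for a sharper constant. Beyond this, the argument is purely an exercise in Plancherel and a trivial count of lattice points in a short dyadic interval, so I do not anticipate any serious obstacle. The hypothesis $\sum_{n} |a_n| < \infty$ is used solely to justify the interchange of summation and integration when passing to the Fourier side.
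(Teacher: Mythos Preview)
The paper does not actually prove this lemma; it simply quotes it as the case $q=1$ of Gallagher's large sieve inequality \cite[Theorem~2]{Gallagher-LS} and moves on. Your sketch is essentially Gallagher's own argument, and it is correct. One minor point: you state that $\widehat{\phi}\geqslant 0$ but never use it---your AM--GM step only needs the rapid decay of $|\widehat{\phi}|$, so the construction $\phi=\psi*\psi$ is more than you require (any even Schwartz $\phi\geqslant \mathbf{1}_{[-1,1]}$ would do). The lattice-point count $\#\{m:|T\log(m/n)|\Lt 1\}\Lt 1+n/T$ is the heart of the matter and you have it right.
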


\section{Refined Analysis for the Bessel Integral} 

Subsequently, we shall always let $U = \log T$. For  $v_1, v_2 \in [\vepsilon - i U, \vepsilon + i U]$ define 
\begin{align}
	h (t; v_1, v_2) = 
	k  (t) \delta (v_1, t) \delta (v_2, t)^2,
\end{align}
with
\begin{align}\label{3eq: defn k(nu)}
	k (t)  =  e^{- (t  - T)^2 / M^2} + e^{-(t + T)^2 / M^2}  . 
\end{align}
Since $v_1$ and $v_2$ are inessential to our analysis, we shall simply write  $h (t) = h  (t; v_1, v_2)$ and let $\SDH (x)  $  be its associated Bessel integral as in \eqref{2eq: integral H}.  

For the analysis of $\SDH (\pm x^2) $, the most important are certain integral representations. The reader is referred to  \cite[\S \S 4, 5]{XLi2011}, \cite[\S 7]{Young-Cubic}, \cite[Appendix]{Qi-Liu-LLZ}, and \cite[\S 8.1]{Qi-GL(3)} for more details, and  also
 \cite[\S 7]{Qi-Liu-Moments} for a summary (although $h(t)$ may vary in different settings). 

	For the proof of Theorem {\rm\ref{thm: T and H}}, we shall change $T$ into $K$ as in Lemma \ref{lem: average}, but keep   $U = \log T$ for the range of $v_1, v_2$, so that $K$ and $v_1, v_2$ vary independently. Moreover, in order to perform the $K$-integration effectively, we wish to  refine  (the weight function in) the integral representations.

\begin{lem}\label{lem: H(x), |z|>1}
We may write 	 $ \SDH (x) = \SDH_{ \splus }  (x) + \SDH_{\sminus}  (x) + O_A   (T^{-A} ) $ for $|x| >  1$, with 
	\begin{equation}\label{8eq: H+natural}
		\SDH_{\spm}   (  x^2) =     {MT^{1+v }}   
		\int_{- M^{\vepsilon} / M}^{M^{\vepsilon}/ M}   g (    {   M r} )  e( Tr / \pi \mp 2 x \cosh r  ) \nd r,
	\end{equation} 
	\begin{equation}\label{8eq: H-natural}
		\SDH_{  \spm}   (- x^2) =    {MT^{1+v }}   
		\int_{- M^{\vepsilon} / M}^{M^{\vepsilon}/ M}   g (    {   M r} )  e( Tr / \pi \pm 2 x \sinh r  ) \nd r,
	\end{equation}
	for $x > 1$, where $v = v_1 +2v_2$, and $g (r)$ is a Schwartz function of the form  
	\begin{align}\label{3eq: g = g0+gflat}
		g (r ) = g^{\oldstylenums{0}} (r) + \frac {M U} T   g^{\flat} (r) ,
	\end{align}
with 
\begin{align}\label{3eq: g0}
	g^{\oldstylenums{0}} (r) = \frac {2^{1-v}} {\pi^{3/2+v}} e^{- r^2} , 
\end{align}
\begin{align}\label{3eq: g flat}
	(\nd /\nd r)^i g^{\flat} (r)  \Lt_{i, A, \vepsilon}  (1 +   {|r|}   )^{-A}.   
\end{align}
\end{lem}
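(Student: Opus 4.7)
My plan is to start from the standard double-integral representation of $\SDH(\pm x^2)$ on $|x|>1$ worked out by Xiaoqing Li, Young, and Qi--Liu in the cited papers, and then evaluate the inner $t$-integral using the sharp localization of $k(t)$ around $t=\pm T$. Concretely, after inserting a Mehler--Sonine-type identity for $B_{it}(x^2)$ (which produces a $\cosh r$ phase) and its analogue for $B_{it}(-x^2)$ (which produces a $\sinh r$ phase), and swapping the orders of integration, $\SDH(\pm x^2)$ is rewritten as a sum of two oscillatory $r$-integrals of the schematic form
\[
\int_{-\infty}^{\infty} e\bigl(\mp_1 2x\, C_{\pm}(r)\bigr)\, \widetilde h(r)\, \nd r,
\]
where $C_+(r)=\cosh r$, $C_-(r)=\sinh r$, and $\widetilde h(r)$ is essentially the Fourier transform of $h(t)\tanh(\pi t)t$; tails of this representation arising from the subleading pieces of the Bessel asymptotic contribute the $O_A(T^{-A})$ error.

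To evaluate $\widetilde h(r)$, apply the local expansion \eqref{2eq: delta(t)=}. Since $k(t)$ concentrates on $|t \mp T|\Lt M (\log T)^{1/2}$ (the complement contributing $O_A(T^{-A})$) and since $|t|\Gt U^2$ throughout the effective support, one may write
\[
\delta(v_1,t)\,\delta(v_2,t)^2 \;=\; (t/2\pi)^{v}\bigl(1+O(U^2/|t|)\bigr),\qquad v=v_1+2v_2.
\]
Taylor-expanding $(t/2\pi)^v\tanh(\pi t) t$ about $t=\pm T$ and performing the resulting Gaussian Fourier integral explicitly produces a leading term of the form $MT^{1+v} e(Tr/\pi)\, g^{\oldstylenums{0}}(Mr)$ with the constant given by \eqref{3eq: g0}; the two sign choices $\pm_1$ in the exponential kernel split this into the two pieces $\SDH_{\splus}$ and $\SDH_{\sminus}$.

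The corrections coming from the $\delta^{\flat}$ factor and from the higher Taylor coefficients are of relative size $U^2/|t|\asymp U^2/T$. After convolution with the Gaussian window of width $M$ in $t$ (equivalently, with a Schwartz envelope of width $1/M$ in $r$) they assemble into a Schwartz function of $Mr$ multiplied by a factor of order $MU/T$, one power of $U$ being absorbed into the Schwartz constant (consistent with \eqref{3eq: g flat}). Collecting all such pieces into a single $g^{\flat}$ and truncating to $|r|\leqslant M^{\vepsilon}/M$, using the super-polynomial decay of both $g^{\oldstylenums{0}}(Mr)$ and $g^{\flat}(Mr)$ outside this window, produces the representations \eqref{8eq: H+natural} and \eqref{8eq: H-natural}.

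The main obstacle will be establishing \eqref{3eq: g flat} uniformly in $T$, $M$, and in $v_1,v_2$ on the entire contour $[\vepsilon-iU,\vepsilon+iU]$. One must package the contributions from $\delta^{\flat}$, from Stirling's expansion of $\tanh(\pi t)t$, and from all higher Taylor remainders in $t-(\pm T)$ into a single function whose derivatives are controlled independently of the parameters. This uniformity is essential because $g^{\flat}$ will later be integrated against the $v_j$-contours coming from the approximate functional equations in Lemma~\ref{lem: afq}, so any parameter-dependence left inside $g^{\flat}$ would propagate into unwanted losses downstream.
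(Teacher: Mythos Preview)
Your approach is essentially the one the paper has in mind: it does not give a self-contained argument but refers to \cite[A.2, A.4]{Qi-Liu-LLZ} and \cite[\S 5]{Young-Cubic} analyzed more carefully via \eqref{2eq: delta(t)=}, which is exactly the Mehler--Sonine representation followed by Gaussian Fourier analysis that you outline. One small correction to your bookkeeping: the factor $MU/T$ in front of $g^{\flat}$ is produced by the \emph{linear} Taylor term $(1+v)T^{v}(t-T)$ in the expansion of $t\,(t/2\pi)^{v}$ about $t=\pm T$ (the factor $(t-T)$ picking up an $M$ under the Gaussian Fourier transform, and $|1+v|\Lt U$), whereas the $\delta^{\flat}$ contribution you cite is only of relative size $U^{2}/T\Lt MU/T$ and is therefore subordinate, so your passage from $U^{2}/T$ to $MU/T$ via ``convolution with the Gaussian window'' is not the right mechanism.
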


 Lemma \ref{lem: H(x), |z|>1}, especially \eqref{3eq: g = g0+gflat},  can be proven by analyzing the arguments in \cite[A.2, A.4]{Qi-Liu-LLZ} more carefully with the aid of \eqref{2eq: delta(t)=} (see also the asymptotic analysis in \cite[\S 5]{Young-Cubic}).  
 
 It is important that neither the definition in \eqref{3eq: g0} nor the implied constants in \eqref{3eq: g flat}   depend on $T$ or $M$.  Evidently, the derivatives of $g(r)$ also satisfy \eqref{3eq: g flat}.
 
According to \eqref{3eq: g = g0+gflat}, for $|x| > 1$ we may write
\begin{align}\label{3eq: H + H0}
	\SDH  (x) = \SDH^{\shskip\oldstylenums{0}} (x) + \frac {M U} {T} \SDH^{\shskip\flat} (x) + O_A  (T^{-A}) ,
\end{align}
where $\SDH^{\shskip\oldstylenums{0}}  = \SDH_{\splus}^{\shskip\oldstylenums{0}}  + \SDH_{\sminus}^{\shskip\oldstylenums{0}}   $ and  $\SDH^{\shskip\flat}   = \SDH_{\splus}^{\shskip\flat}   + \SDH_{\sminus}^{\shskip\flat}  $, with  $\SDH_{\spm}^{\shskip \oldstylenums{0}}  $ and  $\SDH_{\spm}^{\shskip\flat}  $ defined in the same manner by \eqref{8eq: H+natural} and \eqref{8eq: H-natural}. 

Moreover, the following lemmas tell us the ranges that we need to focus on. 

\begin{lem}\label{lem: x small}
For $|x| \leqslant 1$ we have	$ \SDH (x) \Lt_A  MT^{1-2A} \sqrt{|x|} $.
\end{lem}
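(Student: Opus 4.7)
The plan is to expand $B_{it}(x)$ in its small-argument Taylor series and shift the $t$-contour off the real axis, extracting both the $\sqrt{|x|}$ factor and arbitrary polynomial decay in $T$. For $x > 0$ one substitutes
\begin{equation*}
J_{\mp 2it}(4\pi\sqrt{x}) = \sum_{m \geqslant 0}\frac{(-1)^m (2\pi)^{\mp 2it + 2m}\,x^{\mp it + m}}{m!\,\Gamma(1 \mp 2it + m)}
\end{equation*}
into $B_{it}(x) = (\pi/\sin(\pi it))(J_{-2it} - J_{2it})(4\pi\sqrt{x})$, and uses the analogous expansion of $K_{2it}(4\pi\sqrt{|x|})$ in terms of $I_{\pm 2it}$ when $x < 0$. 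Plugging this into the definition of $\SDH(x)$ and interchanging sum with integral (legitimate by absolute convergence for $|x| \leqslant 1$), the estimation reduces, for each $m \geqslant 0$ and each choice of sign, to that of
\begin{equation*}
\frac{(4\pi^2|x|)^m}{m!}\int_{-\infty}^{\infty} h(t)\,\tanh(\pi t)\,t\,\frac{(2\pi\sqrt{|x|})^{\mp 2it}}{\sin(\pi it)\,\Gamma(1 \mp 2it + m)}\,\nd t.
\end{equation*}

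For each such integral I would shift the $t$-contour to $\Im t = \pm(A + 1/2)$, with the sign chosen so that the exponent of $(2\pi\sqrt{|x|})$ becomes positive; its magnitude on the new contour is then $(2\pi\sqrt{|x|})^{2A + 1} \Lt_A \sqrt{|x|}$ since $|x| \leqslant 1$. On the shifted line the Gaussian $k(t)$ still confines the effective $t$-support to length $\Lt M$, while Stirling's formula yields $|\sin(\pi it)| \asymp e^{\pi T}$ and $|\Gamma(1 \mp 2it + m)| \asymp T^{m + 2A + 3/2}\,e^{-\pi T}$ for $|\Re t| \sim T$. The exponential factors cancel, giving the clean estimate
\begin{equation*}
\left|\frac{t\,\tanh(\pi t)}{\sin(\pi it)\,\Gamma(1 \mp 2it + m)}\right| \Lt T^{-m - 2A - 1/2}.
\end{equation*}
Summing the geometric series in $m$ (of ratio $|x|/T \leqslant 1$) yields a per-sign bound of $\Lt_A M T^{-2A - 1/2}\sqrt{|x|}$, which easily dominates the claimed $M T^{1 - 2A}\sqrt{|x|}$.

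The main technical point is that the contour shift to $\Im t = A + 1/2$ crosses the poles of $h(t)$ coming from the Gamma factors $\Gamma((1/2 + v_j + it)/2)$ in $\delta(v_j, t)$; these poles sit on the lines $\Im t = 1/2 + \vepsilon + 2k$ for $k \geqslant 0$, with real part $-\Im v_j \in [-U, U]$ where $U = \log T$. Each pole is handled by a small indentation whose residue concentrates the $t$-integral at $t_0 = -\Im v_j + i(1/2 + \vepsilon + 2k)$. Since $|\Re t_0| \leqslant U = \log T \ll T$ while $k(t)$ is a Gaussian of scale $M \leqslant T^{1 - \vepsilon}$ centered at $\pm T$, one has $|k(t_0)| \Lt \exp(-T^{2\vepsilon}/4)$. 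This super-polynomial decay absorbs the $O(A)$ residue contributions and leaves the main bound intact.
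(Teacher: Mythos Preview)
Your approach is correct and is essentially the standard one found in the references the paper cites (the paper itself supplies no proof of this lemma, deferring to \cite{Ivic-Jutila-Moments,XLi2011,Young-Cubic,Qi-Liu-LLZ}): expand the Bessel function in its power series, interchange with the $t$-integral, and shift the $t$-contour by $\pm(A+\tfrac12)$ in the direction that makes $(2\pi\sqrt{|x|})^{\mp 2it}$ small, using Stirling on the shifted line.

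Two small points. First, in addition to the poles of $h(t)$ coming from the $\Gamma$-factors in $\delta(v_j,t)$, the contour shift also crosses poles coming from the Bessel kernel itself: since $\tanh(\pi t)/\sin(\pi it)=-i/\cosh(\pi t)$, these sit at $t=i(k+\tfrac12)$ for integers $k\geqslant 0$. They lie on the imaginary axis, so the same argument applies verbatim---$k(t_0)\Lt\exp(-cT^{2\vepsilon})$ because $\Re t_0=0$ while $k$ is concentrated near $\pm T$. You should mention these, and choose the height of the shifted contour (say $\Im t=A+\tfrac14$) so as not to land exactly on one of them. Second, the $m$-series you sum is not geometric but is dominated by $\exp(4\pi^2|x|/T)=O(1)$, which is of course even better; and do not forget the factor $T^{O(\vepsilon)}$ from $|\delta(v_j,t)|\Lt T^{\vepsilon}$ on the shifted line---harmless since $A$ is free.
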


\begin{lem}\label{lem: H small}
For $|x| > 1$	we have $ \SDH  (x ) = O_A  (T^{-A})$ unless $|x| \Gt T^2$. To be precise,  $ \SDH  (x^2)   $ and  $ \SDH  (-x^2)  $ are negligibly small unless $x > M^{1-\vepsilon} T$ and $x \asymp T$ respectively. 
\end{lem}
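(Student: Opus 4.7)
The plan is to prove Lemma \ref{lem: H small} by iterated non-stationary phase applied to the integral representations \eqref{8eq: H+natural}--\eqref{8eq: H-natural} from Lemma \ref{lem: H(x), |z|>1}. The splitting \eqref{3eq: H + H0} is harmless as the extra factor $MU/T$ is bounded. Expanding $e(\,\cdot\,) = e^{2\pi i\,\cdot\,}$, the total phase is $\Phi_{\pm}(r) = 2Tr \mp 4\pi x \cosh r$ for $\SDH_{\pm}(x^{2})$, and $\Psi_{\pm}(r) = 2Tr \pm 4\pi x \sinh r$ for $\SDH_{\pm}(-x^{2})$. The trivial bound $\SDH \Lt MT^{1+\vepsilon}$ follows from the support size $\asymp M^{\vepsilon}/M$ and the prefactor $MT^{1+v}$ with $v = v_{1}+2v_{2} = O(\vepsilon)$; to reach $O_{A}(T^{-A})$, each integration by parts must save at least $T^{-\delta}$ for some fixed $\delta > 0$, and $O(A/\delta)$ iterations will then suffice.

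For $\SDH_{\pm}(x^{2})$ we have $\Phi'_{\pm}(r) = 2T \mp 4\pi x \sinh r$. Since $|\sinh r| \leqslant 2|r| \leqslant 2 M^{\vepsilon}/M$ on the support of $g(Mr)$, if $x \leqslant \eta M^{1-\vepsilon} T$ for a sufficiently small absolute constant $\eta$, then $|4\pi x \sinh r| \leqslant T$ and thus $|\Phi'_{\pm}| \Gt T$. Combined with $|\Phi''_{\pm}| \Lt x$, the standard IBP operator $L = (2\pi i \Phi'_{\pm})^{-1}(\nd/\nd r)$ yields a saving of $M/|\Phi'_{\pm}| + |\Phi''_{\pm}|/|\Phi'_{\pm}|^{2} \Lt M/T + M^{1-\vepsilon}/T \Lt T^{-\vepsilon}$ per iteration. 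Hence $\SDH_{\pm}(x^{2}) = O_{A}(T^{-A})$ whenever $x \Lt M^{1-\vepsilon} T$, matching the asserted threshold.

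For $\SDH_{\pm}(-x^{2})$ we have $\Psi'_{\pm}(r) = 2T \pm 4\pi x \cosh r$. The $+$ case always satisfies $\Psi'_{+} \geqslant 2T + 4\pi x \Gt T + x$, so is universally non-stationary, and the same IBP argument gives $\SDH_{+}(-x^{2}) = O_{A}(T^{-A})$. In the $-$ case, stationarity $\Psi'_{-}(r_{0}) = 0$ requires $\cosh r_{0} = T/(2\pi x)$ to admit a solution in $|r_{0}| \leqslant M^{\vepsilon}/M$, which forces $1 \leqslant T/(2\pi x) \leqslant 1 + O(M^{2\vepsilon-2})$, i.e.\ $x \asymp T$. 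Outside this narrow window---say $x \leqslant T/C$ or $x \geqslant C T$ for a suitable absolute constant $C$---one has $|\Psi'_{-}| \Gt \max(T, x)$, and repeated IBP again delivers $O_{A}(T^{-A})$.

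The main obstacle is entirely one of bookkeeping: since derivatives of $g(Mr)$ contribute factors of $M^{j}$ by \eqref{3eq: g = g0+gflat}--\eqref{3eq: g flat} (with $g^{\flat}$ further multiplied by the harmless $MU/T$), one must verify that these are dominated by the corresponding powers of the phase derivatives throughout each non-stationary region. The hypothesis $M \leqslant T^{1-\vepsilon}$ guarantees $M/T \leqslant T^{-\vepsilon}$, so the IBP argument closes uniformly. The delicate structural feature, worth flagging, is that the $\sinh$ versus $\cosh$ structure in $\Phi'_{\pm}$ versus $\Psi'_{\pm}$ is precisely what produces the two distinct thresholds: from $|\sinh r| \Lt |r|$ one extracts the extra factor $M^{\vepsilon}/M$ and hence the threshold $x > M^{1-\vepsilon}T$, whereas from $\cosh r \geqslant 1$ one only gets $x \asymp T$.
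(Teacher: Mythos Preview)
Your proof is correct and is precisely the standard non-stationary phase argument that the paper implicitly invokes: the paper does not give an explicit proof of this lemma but states it as a direct consequence of the integral representations in Lemma~\ref{lem: H(x), |z|>1}, deferring details to \cite[A.2, A.4]{Qi-Liu-LLZ}, \cite[\S 7]{Young-Cubic}, and \cite[\S 8.1]{Qi-GL(3)}. Your phase analysis---$\sinh r$ controlling the $\SDH(x^2)$ threshold at $M^{1-\vepsilon}T$ and $\cosh r$ forcing $x \asymp T$ for $\SDH(-x^2)$---matches those references exactly, and the IBP bookkeeping is in order.
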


Of course,  Lemma \ref{lem: H small}  also holds if $\SDH$ were replaced by  $\SDH^{\shskip \oldstylenums{0}} $ or  $\SDH^{\shskip\flat} $. 

\section{Refined Analysis for the Fourier--Hankel Transform}\label{sec: analysis Hankel}

In this section, we use the analysis in the work of Young \cite{Young-Cubic} (and the author \cite{Qi-GL(3)}) to study the Fourier--Hankel transform of the Bessel integral. 

Let $\varww_1  (x), \varww_2 (x) \in C_c^{\infty} (0, \infty)$ be such that $ \varww_1^{(i)} (x), \varww_2^{(i)} (x)  \Lt_{i} U^{i} $. For  $\varLambda \Gt T^2$ define 
\begin{align}\label{11eq: defn of w (x, Lmabda), R}
	\varww^{\spm} (x_1, x_2 ; \varLambda ) = \varww_1 (x_1) \varww_2 (x_2) \SDH  ( \pm \varLambda x_1 x_2 ) . 
\end{align}
The Fourier--Hankel integral transform arising after Vorono\"i and Poisson will be of the   form
\begin{align}\label{3eq: defn of Fourier-Hankel}
	\widehat{\widetilde{\varww}}^{\spm} (y_1, y_2; \varLambda) = \int_0^{\infty} \int_0^{\infty} \varww^{\spm} (x_1, x_2 ; \varLambda ) e (-x_1 y_1) B_0 (x_2 y_2) \nd x_1 \nd x_2. 
\end{align}
Define $ \widehat{ \widetilde{\varww}}^{\oldstylenums{0}\,\spm}_{} (y_1, y_2; \varLambda) $ and $\widehat{\widetilde{\varww}}^{\flat \, \spm} (y_1, y_2; \varLambda)$ in the same way.

To apply the analysis of Young, we reformulate \eqref{3eq: defn of Fourier-Hankel} in the following way (with $x=x_1 x_2$)
\begin{align}
	\widehat{\widetilde{\varww}}^{\spm} (y_1, y_2; \varLambda) = \int_0^{\infty}  \SDH  ( \pm \varLambda x )  J (x; y_1, y_2) \nd x ,
\end{align}
with Fourier--Bessel integral  kernel
\begin{align}
J (x; y_1, y_2) =	\int_0^{\infty} \varww_1 (x_1) \varww_2 (x/x_1) e (-x_1 y_1) B_0 ( x y_2 / x_1) \frac {\nd x_1} {x_1} . 
\end{align}
The next lemma provides a description of the asymptotic of $J (x; y_1, y_2)$ which is similar to those in \cite[Lemma 6.5]{Young-Cubic} and \cite[Lemma 4.12]{Qi-GL(3)} as expected.

\begin{lem}\label{lem: asymptotic of J}
Assume that $ |y_2| > T^{\vepsilon}$. Then $J (x; y_1, y_2) = O (T^{-A})$ unless $y_2 > T^{\vepsilon}$ is positive and
\begin{align}
|y_1|   \asymp \textstyle \sqrt{y_2 \phantom{I\hskip -5.5pt}}  , 
\end{align}
and under these conditions there is a smooth function $W (  \varlambda  ; y_1, y_2 )$ with support in $ \varlambda  \asymp \sqrt[3]{ y_1y_2 \phantom{I\hskip -5.5pt}}  $, satisfying
\begin{align}\label{4eq: bounds for W}
\varlambda^{i}	\frac {\partial^{i} W (\varlambda ; y_1, y_2 ) } {\partial \varlambda^{i}} \Lt_{i} U^i ,
\end{align}
with the implied constants uniform in $y_1$ and $y_2$, such that 
\begin{align}\label{4eq: asymptotic of J}
	J (x; y_1, y_2) = \frac {e (- 3    \sqrt[3]{ xy_1y_2 \phantom{I\hskip -5.5pt}}  )} {  \sqrt[3]{ |xy_1y_2|}} W ( {\textstyle \sqrt[3]{  xy_1y_2 \phantom{I\hskip -5.5pt} }}; y_1, y_2) + O (T^{-A}).
\end{align}
\end{lem}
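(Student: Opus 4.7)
The plan is to insert the oscillatory asymptotic for $B_0$ from \eqref{4eq: asymptotic, R+} into the inner $x_1$-integral of $J$ and then perform a stationary phase analysis, closely paralleling \cite[Lemma 6.5]{Young-Cubic} and \cite[Lemma 4.12]{Qi-GL(3)}. Since $\varww_1, \varww_2$ are compactly supported in $(0, \infty)$, we may restrict to $x_1, x/x_1 \asymp 1$, hence $x \asymp 1$, so the hypothesis $|y_2| > T^{\vepsilon}$ forces $|xy_2/x_1| \gg T^{\vepsilon} \gg 1$. When $y_2 < 0$, the second bound in \eqref{4eq: asymptotic, R+} gives $B_0(xy_2/x_1) \Lt \exp(-4\pi \sqrt{x|y_2|/x_1})$, so $J = O(T^{-A})$ trivially. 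Assuming $y_2 > T^{\vepsilon}$, the asymptotic yields, up to $O(T^{-A})$,
\begin{align*}
	J(x; y_1, y_2) = \sum_{\pm} \int_0^{\infty} f_{\pm}(x_1) \, e(\phi_{\pm}(x_1)) \, \frac{\nd x_1}{x_1},
\end{align*}
where $\phi_{\pm}(x_1) = -x_1 y_1 \pm 2\sqrt{xy_2/x_1}$ and $f_{\pm}(x_1) = \varww_1(x_1) \varww_2(x/x_1) (xy_2/x_1)^{-1/4} W_0(\pm \sqrt{xy_2/x_1})$ is smooth, compactly supported in $x_1 \asymp 1$, and satisfies $(x_1 \partial_{x_1})^i f_{\pm} \Lt_i U^i (xy_2)^{-1/4}$.

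Next I analyze the phase. Solving $\phi_{\pm}'(x_1) = -y_1 \mp \sqrt{xy_2}/x_1^{3/2} = 0$ requires the $\pm$ sign to be opposite to that of $y_1$, and yields the unique critical point $x_1^* = (xy_2/y_1^2)^{1/3}$. Since the support constrains $x_1^* \asymp 1$, a stationary point lies in the support only when $y_1^2 \asymp xy_2 \asymp y_2$, i.e., $|y_1| \asymp \sqrt{y_2}$. Outside this regime one has $|\phi_{\pm}'(x_1)| \gtrsim \max(|y_1|, \sqrt{y_2}) \gg T^{\vepsilon/2}$ uniformly on the support, while higher derivatives of $\phi_{\pm}$ are controlled by powers of $\sqrt{xy_2}$ and those of $f_{\pm}$ by powers of $U$; repeated integration by parts then gives $J = O(T^{-A})$.

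Under $y_2 > T^{\vepsilon}$ and $|y_1| \asymp \sqrt{y_2}$, a direct computation shows
\begin{align*}
	\phi_{\pm}(x_1^*) = -3\,\mathrm{sign}(y_1)\cdot (x|y_1|y_2)^{1/3} = -3\sqrt[3]{xy_1y_2}
\end{align*}
(with the signed cube root), and $\phi_{\pm}''(x_1^*) \asymp \sqrt{xy_2} \asymp |y_1|$, so the principal stationary-phase amplitude equals $f_{\pm}(x_1^*)/\sqrt{|\phi_{\pm}''(x_1^*)|} \asymp (xy_2)^{-1/4}/|y_1|^{1/2} \asymp |y_1|^{-1} \asymp \sqrt[3]{|xy_1y_2|}^{\,-1}$. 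The standard weighted stationary phase expansion then yields \eqref{4eq: asymptotic of J} with $W(\varlambda; y_1, y_2)$ smooth and supported in $\varlambda \asymp \sqrt[3]{y_1y_2}$.

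It remains to verify the derivative bound \eqref{4eq: bounds for W}. With $y_1, y_2$ held fixed, the change of variables $\varlambda = \sqrt[3]{xy_1y_2}$ gives $\varlambda \partial_{\varlambda} = 3 x \partial_{x}$. The $x$-dependence of $W$ enters only via $\varww_2(x/x_1^*)$, $W_0(\pm\sqrt{xy_2/x_1^*})$, and algebraic factors of $x$ appearing in each term of the full asymptotic expansion; a direct check shows that $x\partial_x$ acting on any of these is bounded by $U$ (for $\varww_2$) or by $O(1)$ (for $W_0$ and the algebraic factors), with bounds independent of $y_1, y_2$. Iterating, $\varlambda^i \partial_{\varlambda}^i W \Lt_i U^i$ uniformly. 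The main technical obstacle, routine in spirit but tedious, is ensuring that this $y_1, y_2$-uniformity survives at every order of the stationary phase expansion—this is where one must carefully track the derivatives of $f_{\pm}$ against powers of $1/|\phi_{\pm}''|$, as in the cited references.
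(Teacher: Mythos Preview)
Your proposal is correct and follows essentially the same approach as the paper: insert the Bessel asymptotic \eqref{4eq: asymptotic, R+}, dispose of $y_2<0$ by exponential decay, localize via integration by parts, and then apply stationary phase in $x_1$. The one execution difference worth noting is that the paper, before invoking stationary phase, substitutes $x_1 \ra \sqrt[3]{xy_2/y_1^2}\cdot x_1$, which moves the critical point to $x_1=1$ and recasts the phase as $\varlambda\,(3 - 2/\sqrt{x_1} - x_1)$ with amplitude bounds independent of $y_1,y_2$; Sogge's stationary-phase theorem then gives \eqref{4eq: bounds for W} directly and uniformly, obviating the order-by-order bookkeeping you flagged as the ``main technical obstacle.''
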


\begin{proof}
In view of \eqref{4eq: asymptotic, R+}, the integral $J (x; y_1, y_2)$ is exponentially small if $y_2 < - T^{\vepsilon}$ is negative, and it suffices to consider for $y_2 > T^{\vepsilon}$ integrals of the form
\begin{align}\label{4eq: J-integral}
  \frac 1 {  \sqrt[4]{ y_2 \phantom{I\hskip -5.5pt}} }	\int_0^{\infty} \varww^{\spm} (x_1, x; y_2) e \big(\hskip -1pt \pm 2 \sqrt{xy_2 / x_1 \hskip -1pt} -x_1 y_1 \big)   {\nd x_1} ,
\end{align}
for suitable smooth $ \varww^{\spm} ( x_1 , x; y_2)  $ supported in $x_1, x \asymp 1$, such that    \begin{equation}\label{4eq: bounds for w}
	\frac {\partial^{i+j } \varww^{\spm} ( x_1 , x; y_2) } {\partial x_1^{i} \partial x^j  }  \Lt_{i, j } U^{i+j } .  
\end{equation} 
The first assertion follows immediately from repeated partial integration (for example, one can use \cite[Lemma 7.1]{Qi-GL(3)}). Moreover, the integral in \eqref{4eq: J-integral} is also negligibly small if the $\pm$ does not match the sign of $y_1$.  As for the second assertion, we assume  for simplicity that $y_1 \asymp \ds \sqrt{y_2}$ is positive, let $\varlambda =   \sqrt[3]{  xy_1y_2 \phantom{I\hskip -5.5pt}}$, and make the change of variable $x_1 \ra   \sqrt[3]{xy_2/y_1^2} \cdot x_1 $ so that \eqref{4eq: J-integral} is transformed into
\begin{align}
	  \frac {e (- 3 \varlambda)} {\sqrt{|\varlambda|}} \int_0^{\infty} \varvv (x_1; \varlambda; y_1, y_2 ) e \big( \varlambda \big(3  - 2/ \sqrt{x_1} - x_1  \big) \big) \nd x_1, 
\end{align}
where $ \varvv (x_1; \varlambda; y_1, y_2 ) $ is smooth, supported in $ x_1 \asymp 1 $, such that
\begin{align}\label{4eq: bounds for v}
\varlambda^j	\frac {\partial^{i + j } \varvv ( x_1; \varlambda; y_1, y_2) } {\partial x_1^{i} \partial \varlambda^j  }  \Lt_{i, j } U^{i + j } . 
\end{align} 
Then \eqref{4eq: bounds for W} and \eqref{4eq: asymptotic of J} follow from Sogge's version of stationary phase estimates (see \cite[Theorem 1.1.1]{Sogge} and \cite[Lemma 7.3]{Qi-GL(3)}). 

It should be stressed that  the implied constants in  \eqref{4eq: bounds for w}  and \eqref{4eq: bounds for v} (also in  \eqref{4eq: bounds for W}) do not depend on $y_1$ or $y_2$. 
\end{proof}

The following lemma is essentially (8.16) in \cite{Young-Cubic} but stated  in the fashion of Proposition 11.3 and Corollary 11.10 in \cite{Qi-GL(3)} with necessary adaptions. It is obtained by the method of stationary phase.

\begin{lem}\label{lem: Phi + -}
Let $ y_2  > T^{\vepsilon} $ and $\varLambda \Gt T^2$.  Assume  $|y_1|   \asymp   \sqrt{y_2 \phantom{I\hskip -5.5pt}}$. Then for $y_1 y_2 \asymp Y$ we have
\begin{align}\label{5eq: w = Phi}
	e (\pm y_1 y_2/ \varLambda)  \widehat{\widetilde{\varww}}^{\spm} (y_1, y_2; \varLambda) = \frac {MT^{1+v}} {\sqrt{|y_1y_2|}} \Phi^{\spm} (y_1 y_2/\varLambda) + O (T^{-A}), 
\end{align}
such that {\rm(}for $x \asymp Y/\varLambda${\rm)} $\Phi^{\splus} (x) = 0$ and $\Phi^{\sminus}(x) = 0$  unless 
\begin{align}\label{5eq: range of Y, +}
	\sqrt[3]{|Y|}  \asymp \sqrt{\varLambda}, \qquad |Y|/\varLambda  > T M^{1-\vepsilon} , 
\end{align}
and
\begin{align}\label{5eq: range of Y, -}
	\sqrt{\varLambda} \asymp T , \qquad     \sqrt{\varLambda}/\sqrt[3]{|Y|}   >  M^{1-\vepsilon} , 
\end{align}
respectively, in which cases 
\begin{align}\label{5eq: Phi+=}
	\Phi^{\splus} (x) = \int e (Tr/\pi - x \tanh^2 r) V^+ (r) \nd r,
\end{align}
provided that $|x| < T^{2-\vepsilon}$,  and
\begin{align}\label{5eq: Phi-=}
	\Phi^{\sminus} (x) = \int e (Tr/\pi - x \coth^2 r) V^- (r) \nd r,
\end{align}
where $V^{\splus} (r)$, $V^{\sminus} (r)$ are
 supported in
 \begin{align}\label{4eq: support}
r \asymp T\varLambda /Y, \qquad |r| \asymp \sqrt[3]{|Y|}  / \sqrt{\varLambda} , 
 \end{align} 
respectively, satisfying $r^i (\nd/\nd r)^i V^{\spm} (r) \Lt_i U^i$.  
\end{lem}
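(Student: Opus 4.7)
The plan is to apply the method of stationary phase to the double-integral realization of $\widehat{\widetilde{\varww}}^{\spm}$ obtained by inserting the refined asymptotic for $J$ from Lemma~\ref{lem: asymptotic of J} and the integral representation of $\SDH$ from Lemma~\ref{lem: H(x), |z|>1}. Writing
\[
\widehat{\widetilde{\varww}}^{\spm}(y_1,y_2;\varLambda) = \int_0^\infty \SDH(\pm\varLambda x)\, J(x;y_1,y_2)\,\nd x,
\]
and using Lemma~\ref{lem: asymptotic of J} (which already forces $y_2 > T^{\vepsilon}$ and $|y_1|\asymp\sqrt{y_2}$) together with \eqref{8eq: H+natural}--\eqref{8eq: H-natural}, one reduces to a linear combination of double integrals in $(x,r)$ of the form
\[
MT^{1+v}\iint g(Mr)\,\frac{W(\sqrt[3]{xy_1y_2})}{\sqrt[3]{|xy_1y_2|}}\,e\!\left(Tr/\pi\pm 2\sqrt{\varLambda x}\,c(r)-3\sqrt[3]{xy_1y_2}\right)\nd x\,\nd r,
\]
where $c(r)=\cosh r$ for the $\SDH(+\varLambda x)$ part and $c(r)=\sinh r$ for the $\SDH(-\varLambda x)$ part, and the $\pm$ inside the exponential tracks the two branches of $\SDH_{\spm}$.

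I would next perform stationary phase in $x$. The equation $\partial_x \phi = 0$ has a solution in the effective support $\{x\asymp 1\}$ only when the sign of the $\sqrt{\varLambda x}$-term is compatible with $\sqrt[3]{y_1y_2}$; the other sign combinations yield $O(T^{-A})$ by repeated integration by parts. The surviving saddle point is $x_0 = (y_1y_2)^2 / (\varLambda^3 c(r)^6)$, and the constraint $x_0 \asymp 1$ (inherited from the support of $W\varww_1\varww_2$) translates into $\sqrt[3]{|Y|}\asymp\sqrt{\varLambda}$ in the $\cosh$-case and $|\sinh r|\asymp\sqrt[3]{|Y|}/\sqrt{\varLambda}$ in the $\sinh$-case; combined with the non-negligible ranges of $\SDH(\pm\cdot)$ from Lemma~\ref{lem: H small}, this reproduces the dichotomy \eqref{5eq: range of Y, +} versus \eqref{5eq: range of Y, -}. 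Inserting $x_0$ into the phase and using $\cosh^{-2}r = 1-\tanh^2 r$, respectively $\sinh^{-2}r = \coth^2 r - 1$, extracts the oscillatory prefactor $e(\mp y_1y_2/\varLambda)$ and leaves the remaining $r$-integrals in the shape of \eqref{5eq: Phi+=}, \eqref{5eq: Phi-=}. The amplitude at the saddle collects $MT^{1+v}$, the stationary-phase Jacobian $|\partial_x^2\phi(x_0)|^{-1/2}$, and the factor $|x_0 y_1 y_2|^{-1/3}$ from $J$; a short bookkeeping check collapses these to $MT^{1+v}/\sqrt{|y_1y_2|}$, matching the prefactor of \eqref{5eq: w = Phi}. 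The derivative bounds $r^i(\nd/\nd r)^i V^{\spm}(r)\Lt_i U^i$ then follow from \eqref{3eq: g flat}, \eqref{4eq: bounds for W}, and the Sogge-type expansion of stationary-phase integrals (cf.\ \cite[Theorem 1.1.1]{Sogge}) already invoked in Lemma~\ref{lem: asymptotic of J}.

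The tighter support $r\asymp T\varLambda/Y$ for $V^+$ (beyond the $|r|<M^{\vepsilon}/M$ coming from $g(Mr)$) emerges from the stationarity of the $r$-phase $Tr/\pi - (y_1y_2/\varLambda)\tanh^2 r$: its derivative $T/\pi - 2(y_1y_2/\varLambda)\tanh r \shskip \text{sech}^2 r$ vanishes near $r\asymp T\varLambda/Y$, and outside a neighbourhood of size $(|Y|/\varLambda)^{-1/2}$ around this stationary point integration by parts gives negligible cost; this is precisely where $|Y|/\varLambda > TM^{1-\vepsilon}$ is needed so that the $r$-stationary point survives the $g(Mr)$ cutoff. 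The analogous analysis in the $\sinh$-case confines $r$ to the window $|r|\asymp \sqrt[3]{|Y|}/\sqrt{\varLambda}$. The main technical obstacle is the combinatorial bookkeeping---matching each of the sign/branch combinations in $(\SDH_{\spm}, c(r), \operatorname{sgn}(y_1y_2))$ with the appropriate range of $(y_1,y_2,\varLambda)$ and discarding all non-stationary contributions---and, crucially for the later use in Theorem~\ref{thm: T and H}, ensuring that every implied constant and every expansion of $V^{\spm}$ is uniform in $y_1,y_2$, so that the $T$- and $K$-dependences in the asymptotic remain cleanly separated.
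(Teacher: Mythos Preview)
Your proposal is correct and follows precisely the approach the paper indicates: the paper does not give a self-contained proof of this lemma but states that it is ``essentially (8.16) in \cite{Young-Cubic}'' reformulated as in \cite[Proposition 11.3, Corollary 11.10]{Qi-GL(3)} and ``is obtained by the method of stationary phase''. Your write-up supplies exactly those details---insert the asymptotic \eqref{4eq: asymptotic of J} for $J$ and the oscillatory representations \eqref{8eq: H+natural}, \eqref{8eq: H-natural} for $\SDH$, apply stationary phase in $x$ at $x_0=(y_1y_2)^2/(\varLambda^3 c(r)^6)$, and read off the phase via $\cosh^{-2}r=1-\tanh^2r$ and $\sinh^{-2}r=\coth^2r-1$---and your derivation of the support windows in \eqref{4eq: support} and of the non-vanishing conditions \eqref{5eq: range of Y, +}, \eqref{5eq: range of Y, -} is on target.

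One small remark on provenance: the second condition in \eqref{5eq: range of Y, +}, namely $|Y|/\varLambda>TM^{1-\vepsilon}$, can be seen equally well as the non-negligibility threshold $\sqrt{\varLambda}>M^{1-\vepsilon}T$ for $\SDH(+\,\cdot\,)$ from Lemma~\ref{lem: H small} (once one uses $\sqrt[3]{|Y|}\asymp\sqrt{\varLambda}$), rather than purely from the $r$-stationary-point cutoff; the two viewpoints are of course equivalent. Your closing emphasis on uniformity of constants in $y_1,y_2$ is well placed, since that is exactly what the paper needs when it later changes $T$ to $K$ and invokes Lemma~\ref{lem: average T}.
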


Now we write $\Phi^{\spm}  (x) = \Phi^{\spm}_{T, M} (x) $ to indicate its dependence on $T$ and $M$. Moreover we add superscript like ${ \Phi^{\oldstylenums{0}\, \spm}_{T, M}} (x)$ or ${ \Phi^{\flat \, \spm}_{T, M}} (x)$ to indicate its origin from $\SDH^{\shskip \oldstylenums{0}}$ or $\SDH^{\shskip\flat}$. Lemma \ref{lem: Phi + -} remains valid if $\oldstylenums{0}$ or $\flat$ were attached in the notation. 

Let $M^{1+\vepsilon} \leqslant H \leqslant T/3$ and $|K - T| \leqslant H$. We would like to change   $T$  into $K$ and average $ K^{1+v}  \Phi^{\spm}_{K, M} (x) $ over $K$. However, it is only necessary to change   $T$ into $K$ in \eqref{5eq: w = Phi}, \eqref{5eq: Phi+=}, and \eqref{5eq: Phi-=}---the $T$ in \eqref{5eq: range of Y, +}, \eqref{5eq: range of Y, -}, and \eqref{4eq: support} are kept because $K \asymp T$. We stress that $V^{\oldstylenums{0}\, \spm} (r)$ is independent on $K$ as it is just a smooth truncation of $g^{\oldstylenums{0}} (r)$ according to  \eqref{4eq: support}, while   the definition of $ g^{\oldstylenums{0}} (r)$  as in \eqref{3eq: g0} does not involve $T$ (or $K$).

\begin{lem}\label{lem: average T} Let notation be as above. 
	Define
	\begin{align}\label{5eq: U +-, 0}
	U^{\splus} = T^2 \varLambda / |Y|, \qquad U^{\sminus} = T \sqrt[3]{|Y|}  / \sqrt{\varLambda}. 
	\end{align}
Then 
	\begin{equation}\label{5eq: average of Phi}
		\begin{split}
			\int_{T-H}^{T+H} \hskip -2pt K^{1+v}  \Phi^{\spm}_{K, M} (x)  \nd K \hskip -1pt = \hskip -1pt  \frac {T }  { U^{\spm} }  \big( \hskip -0.5pt (T\hskip -1pt + \hskip -1pt H)^{1+v}   \Phi^{\oldstylenums{1}  \, \spm}_{T + H, M} (x) \hskip -1pt - \hskip -1pt (T\hskip -1pt - \hskip -1pt H)^{1+v}  \Phi^{\oldstylenums{1}  \, \spm}_{T - H, M} (x) \hskip -0.5pt \big) & \\
		-  \frac {(1+v)T} {U^{\spm}} \hskip -1pt   \int_{T-H}^{T+H} \hskip -1pt K^v  \Phi^{\oldstylenums{1}  \, \spm}_{K, M} (x) \nd K \hskip -1pt +  \hskip -1pt  {M  U} \hskip -1pt  \int_{T-H}^{T+H} \hskip -1pt K^{ v} \Phi_{K, M}^{\flat \, \spm} (x) \nd K & ,   
		\end{split} 
	\end{equation}
for $ \Phi^{\oldstylenums{1}  \, \spm}_{K, M} (x) $ and $ \Phi^{\flat  \, \spm}_{K, M} (x) $ of the same shape as $\Phi^{\spm}_{K,M} (x)$. 
\end{lem}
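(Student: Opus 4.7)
The proof is a direct integration by parts in $K$ within the defining oscillatory integrals \eqref{5eq: Phi+=}--\eqref{5eq: Phi-=} of $\Phi^{\spm}_{K,M}(x)$, exploiting the structural fact that the $K$-dependence sits entirely in the phase $e(Kr/\pi)$, while the weight $V^{\spm}(r)$ is independent of $K$. The factor $1/(2ir)$ produced in this way supplies the uniform gain $T/U^{\spm}$ claimed in the statement. First, the decomposition \eqref{3eq: H + H0} (with $T$ there replaced by $K$, since $K$ is now our varying height) propagates through the stationary-phase analysis of Lemma~\ref{lem: Phi + -}, so that
\[
\Phi^{\spm}_{K,M}(x) \;=\; \Phi^{\oldstylenums{0}\,\spm}_{K,M}(x) \;+\; \frac{MU}{K}\,\Phi^{\flat\,\spm}_{K,M}(x) \;+\; O(T^{-A}),
\]
with each piece of the form \eqref{5eq: Phi+=}--\eqref{5eq: Phi-=} and with $K$-independent weights of the shape prescribed in Lemma~\ref{lem: Phi + -}. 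Multiplying by $K^{1+v}$ and integrating over $K \in [T-H, T+H]$, the $\flat$-contribution produces exactly the last term $MU \int K^v \Phi^{\flat\,\spm}_{K,M}(x)\,\nd K$ in \eqref{5eq: average of Phi}, while the remainder $O(T^{-A})$ is absorbed harmlessly since $H \leqslant T$.

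Second, for the main $\oldstylenums{0}$-piece, interchange the $K$- and $r$-integrations and integrate by parts in $K$:
\[
\int_{T-H}^{T+H} K^{1+v} e(Kr/\pi)\,\nd K \;=\; \frac{1}{2ir}\big[K^{1+v} e(Kr/\pi)\big]_{K=T-H}^{K=T+H} \;-\; \frac{1+v}{2ir}\int_{T-H}^{T+H} K^{v} e(Kr/\pi)\,\nd K.
\]
Reinserting this into the $r$-integral against $V^{\oldstylenums{0}\,\spm}(r) e(\mp x\tanh^2 r)$ (or with $\coth^2 r$) produces precisely the two-term structure of the first line of \eqref{5eq: average of Phi}, with a new weight $V^{\oldstylenums{0}\,\spm}(r)/(2ir)$ that must be shown to be admissible.

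Third, to identify this weight with the normalization of \eqref{5eq: average of Phi}, observe from \eqref{4eq: support} and \eqref{5eq: U +-, 0} that $|r| \asymp U^{\spm}/T$ uniformly on the support of $V^{\spm}$---namely $r \asymp T\varLambda/Y = U^{\splus}/T$ in the $\splus$ case, and $|r| \asymp \sqrt[3]{|Y|}/\sqrt{\varLambda} = U^{\sminus}/T$ in the $\sminus$ case. Consequently $1/(2ir) = (T/U^{\spm}) \cdot O(1)$ uniformly, and the function
\[
V^{\oldstylenums{1}\,\spm}(r) \;:=\; \frac{U^{\spm}}{2iTr}\, V^{\oldstylenums{0}\,\spm}(r)
\]
is smooth on the same support, is of size $O(1)$, and satisfies $r^{i}(\nd/\nd r)^{i}V^{\oldstylenums{1}\,\spm}(r) \Lt_{i} U^{i}$. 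Defining $\Phi^{\oldstylenums{1}\,\spm}_{K,M}(x)$ by substituting $V^{\oldstylenums{1}\,\spm}$ for $V^{\spm}$ in \eqref{5eq: Phi+=}--\eqref{5eq: Phi-=} gives a function of the same shape as $\Phi^{\spm}_{K,M}(x)$, and a clean factor of $T/U^{\spm}$ is extracted in front of both groups of terms, reproducing \eqref{5eq: average of Phi}.

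No serious obstacle arises beyond careful bookkeeping; the factor $(1+v)$ coming from the integration-by-parts formula matches the coefficient in the second summand of \eqref{5eq: average of Phi}. A mild subtlety is that $v = v_{1} + 2v_{2}$ has imaginary part of size $O(U) = O(\log T)$, but $v$ appears only through $K^{v}$ and the single factor $(1+v)$, so nothing is lost. The \emph{conceptually} crucial point---which effectively forces the exact form of the identity---is the calibration $|r| \asymp U^{\spm}/T$ on $\mathrm{supp}\,V^{\spm}$ built into the definitions \eqref{5eq: U +-, 0}; once this is noted, the rest is straightforward manipulation.
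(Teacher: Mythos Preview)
Your proof is correct and follows essentially the same route as the paper: decompose $\Phi^{\spm}_{K,M}$ via \eqref{3eq: H + H0} into the $\oldstylenums{0}$- and $\flat$-pieces, apply the integration-by-parts identity for $\int K^{1+v} e(Kr/\pi)\,\nd K$, and absorb the resulting $1/(2ir)$ into a new weight $V^{\oldstylenums{1}\,\spm}(r) = (U^{\spm}/2iTr)\,V^{\oldstylenums{0}\,\spm}(r)$ using the calibration $|r|\asymp U^{\spm}/T$ from \eqref{4eq: support}. Your write-up is in fact slightly more explicit than the paper's (which compresses all of this into a few lines), but the argument is the same.
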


\begin{proof}
Express $ \Phi^{\spm}_{K, M} (x) $ on the left of \eqref{5eq: average of Phi} as the sum $ \Phi^{\oldstylenums{0}   \, \spm}_{K, M} (x) + MU/K \cdot \Phi^{\flat \, \spm}_{K, M} (x) $ according to  \eqref{3eq: H + H0}. Then   \eqref{5eq: average of Phi} readily follows from 
the simple identity:
\begin{equation*}
	\begin{split}
		  \int_{T-H}^{T+H}  K^{1+v} e (Kr/\pi) \nd K =  &  \frac {K^{1+v} e (Kr/\pi)} {2   i r}\bigg| {\phantom{\int \hskip -7pt}}_{T-H}^{T+H}  - \frac {1+v} {2 i r} \int_{T-H}^{T+H}  K^{v} e (Kr/\pi) \nd K.
	\end{split} 
\end{equation*}
Note that the factor $1/r$ arises on the right, while, in view of \eqref{4eq: support}, $|1/ r| \asymp U^{\spm} / T$ on the support of $V^{\oldstylenums{0} \, \spm} (r)$, so we can define the weight function in $ \Phi^{\oldstylenums{1}  \, \spm}_{K, M} (x) $ to be  $V^{\oldstylenums{1} \, \spm} (r) = U^{\spm} / 2i T r \cdot  V^{\oldstylenums{0} \, \spm} (r)$.  
\end{proof}

\begin{rem}\label{rem: obstacle}
	It is substantial that the weight function $ V^{\oldstylenums{0}\, \spm} (r) $ is localized as in {\rm\eqref{4eq: support}}, since $r \ra 0$ is not allowed in view of the factor $1/r$ cause by the averaging process. By examining the analysis in {\rm\cite[\S 11]{Qi-GL(3)}}, unfortunately, we find that it is no longer the case if the field is not $\BQ$. 
\end{rem}

Finally, we return to the setting of Lemma \ref{lem: Phi + -} and record here the following expression of $\Phi^{\spm} (x)$ due to Young \cite[Lemma 8.3]{Young-Cubic} (see also \cite[Lemma 13.2]{Qi-GL(3)}). It is obtained by the technique of Mellin transform. 

\begin{lem}\label{lem: Phi after Mellin}
	Let $x \asymp X$. Suppose that $|X| > T^{\vepsilon}/\varLambda$ and $\varLambda \Gt T^2$. For  
	\begin{align}\label{condition +}
		 |X| \asymp \sqrt{\varLambda}, \qquad  T < \sqrt{\varLambda}/  M   ^{1-\vepsilon}, 
	\end{align}
or
	\begin{align}\label{condition -}
	T \asymp \sqrt{\varLambda}	, \qquad |X| < \sqrt{\varLambda} / M   ^{3-\vepsilon} , 
	\end{align}
in the $\pm$-case, 	respectively, we have 
	\begin{align}\label{eq: Phi pm}
		\Phi^{\spm} (  x) = \frac 1 {T} \int_{|t| \asymp U^{\hskip -0.5pt \spm}} \varlambda_{X, T}^{\spm} (t) |x|^{it} \nd t, 
	\end{align}
	with $\varlambda_{X, T}^{\spm} (t) \Lt 1$ {\rm(}$\varlambda_{X, T}^{\spm} (t)$ depends on $X$, $T$ but the implied constant does not{\rm)} and  
	\begin{align}\label{5eq: U +-}
		U^{\splus} = T^2/|X|, \qquad U^{\sminus} = \sqrt[3]{ |X| T^{2}}, 
	\end{align} 
provided that $|X| < T^{2-\vepsilon}$.
\end{lem}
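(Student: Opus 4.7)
The plan is to follow the Mellin--Barnes approach of Young \cite{Young-Cubic}, extracting the variable $|x|^{it}$ from the oscillatory integrals \eqref{5eq: Phi+=} and \eqref{5eq: Phi-=} so that the dependence on $x$ becomes purely multiplicative. First I would insert the Mellin identity
\begin{equation*}
	e(-xu) = \frac{1}{2\pi i}\int_{(c)}\Gamma(s)(2\pi i u)^{-s} x^{-s}\,\nd s \qquad (x,u>0,\ c>0),
\end{equation*}
with $u=\tanh^{2} r$ in the $+$ case and $u=\coth^{2} r$ in the $-$ case. After interchanging integrations, this gives
\begin{equation*}
	\Phi^{\spm}(x) = \frac{1}{2\pi i}\int_{(c)} \Gamma(s)(2\pi i)^{-s} x^{-s} J^{\spm}(s)\,\nd s,
\end{equation*}
where $J^{\splus}(s) = \int V^{+}(r) e(Tr/\pi)(\tanh r)^{-2s}\,\nd r$ and $J^{\sminus}(s)$ is defined analogously with $\coth$. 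Parametrising $s = c+it$, the bounded factor $x^{-c}$ (bounded under $|X|>T^{\vepsilon}/\varLambda$) together with $\Gamma(c+it)(2\pi i)^{-c-it}$ will be absorbed into a function $\varlambda_{X,T}^{\spm}(t)$ multiplying $|x|^{it}$.

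Next I would pin down the essential support in $t$ by stationary phase applied to $J^{\spm}(c+it)$. The total phase is $2(Tr - t\log\tanh r)$, respectively $2(Tr - t\log\coth r)$, so the saddle equation is $T = \mp 2t/\sinh(2r)$. Under \eqref{condition +}, $|X|\asymp\sqrt{\varLambda}$ and the support $r\asymp T\varLambda/Y\asymp T/\sqrt{\varLambda}$ is small, so $\tanh r\sim r$ and the saddle lies in the support exactly when $|t|\asymp T^{2}/\sqrt{\varLambda}\asymp T^{2}/|X|=U^{\splus}$. Under \eqref{condition -}, $\sqrt{\varLambda}\asymp T$ and the support $|r|\asymp\sqrt[3]{|Y|}/\sqrt{\varLambda}\asymp \sqrt[3]{|X|T^{2}}/T$ is again small, placing the saddle at $|t|\asymp\sqrt[3]{|X|T^{2}}=U^{\sminus}$. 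Outside these $t$-ranges, repeated integration by parts in $r$ renders $J^{\spm}(c+it)$ negligible.

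Within the essential ranges the second derivative of the phase is of size $T^{2}/|t|$, so Sogge's quantitative stationary-phase estimate (already invoked in Lemma~\ref{lem: asymptotic of J}) gives $J^{\spm}(c+it)\Lt \sqrt{|t|}/T$, uniformly in $X,T$. Combined with Stirling's $|\Gamma(c+it)(2\pi i)^{-c-it}|\asymp |t|^{c-1/2}$ for $t>0$---where the exponential growth from $(2\pi i)^{-it}$ cancels the exponential decay of $\Gamma(it)$---the integrand of the Mellin--Barnes representation is of size $\asymp 1/T$ on a $t$-segment of length $\asymp U^{\spm}$, matching \eqref{eq: Phi pm} with $\varlambda^{\spm}_{X,T}(t)\Lt 1$ uniformly in $X$ and $T$.

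The main obstacle will be the $-$ case: the singularity of $\coth r$ at $r=0$ must be kept at positive distance from the support of $V^{-}$. Here condition \eqref{condition -} together with the lower bound $|r|\gg \sqrt[3]{|X|T^{2}}/T$ provided by Lemma~\ref{lem: Phi + -} is precisely what makes $(\coth r)^{-2it}$ behave as a tame oscillatory factor on the support, so that the saddle-point analysis applies uniformly in the parameters. A secondary technical point is verifying that the tails $|\Im s|\to\infty$ of the Mellin--Barnes contour contribute only $O(T^{-A})$; this follows from the rapid decay of $J^{\spm}(c+it)$ in $t$ that in turn stems from the smoothness and compact support of $V^{\spm}$ on intervals bounded away from $r=0$.
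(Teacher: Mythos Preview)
Your approach is correct and is precisely the Mellin-transform technique the paper invokes; the paper itself gives no argument beyond citing \cite[Lemma~8.3]{Young-Cubic} and \cite[Lemma~13.2]{Qi-GL(3)}. The only point worth tightening is that the Mellin inversion yields $|x|^{-c-it}$ rather than $|x|^{it}$: either shift to $\mathrm{Re}(s)=0$ (the residue at $s=0$ is negligible since $J^{\spm}(0)$ is small by non-stationary phase, as $U^{\spm}>T^{\vepsilon}$), or keep $c=\vepsilon$ and absorb $|x|^{-\vepsilon}\asymp |X|^{-\vepsilon}$ harmlessly into the bound for $\varlambda_{X,T}^{\spm}$, and handle both signs of $x$ by the two branches $e^{\pm i\pi s/2}\Gamma(s)$ of the oscillatory Mellin identity.
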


Note that \eqref{condition +}, \eqref{condition -}, and \eqref{5eq: U +-} respectively are tantamount to \eqref{5eq: range of Y, +}, \eqref{5eq: range of Y, -}, \eqref{5eq: U +-, 0} on letting $ X = Y/ \varLambda$. Moreover, under the conditions in \eqref{condition +} or \eqref{condition -}, we have
\begin{align}\label{5eq: range of U}
	T^{\vepsilon} < U^{\spm} < \frac {T} {M^{1-\vepsilon}}, 
\end{align}
provided that $ T^{\vepsilon}/\varLambda < |X| < T^{2-\vepsilon} $.  

\begin{rem}\label{rem: no saving}
Note that the product of $1/T$ in {\rm\eqref{eq: Phi pm}} and   $ T/M^{1-\vepsilon} $ in {\rm\eqref{5eq: range of U}} equals $1 / M^{1-\vepsilon}$---it will appear to be  our saving in the error term for $\SM_3^{\snatural} (T, M)$.  Again, by  examining the analysis  in {\rm\cite[\S 13]{Qi-GL(3)}}, we find that there is no longer such a saving if the field is not $\BQ$. 
\end{rem}

\section{A Simple Lemma for the Hankel Transform}\label{sec: second Voronoi}

Our last analytic lemma is on the  Hankel transform of a special kind of functions that involve $x^{it}$.

\begin{lem}\label{lem: Hankel, it}
	Let $t, y > T^{\vepsilon}$. For fixed $\varvv (x) \in C_c^{\infty} (0, \infty)$  define
	\begin{align}
		\widetilde{\varvv}_{0} (0; t)   = & \int_0^{\infty}   {\varvv (x )}  \frac { \nd x} {x^{1/2-it}}  , \qquad \widetilde{\varvv\hskip 1pt}'_{\hskip -1pt 0}   (0; t) = \int_0^{\infty} {\varvv (x )} \log x \frac { \nd x} {x^{1/2-it}} , \\
		& \widetilde{\varvv}_{0} (\pm y; t) = y^{1/2+it}	\int_0^{\infty}   {\varvv (x )} B_0 ( \pm xy )  \frac { \nd x} {x^{1/2-it}} . 
	\end{align}
Then  $\widetilde{\varvv}_{0} (0; t)$, $ \widetilde{\varvv\hskip 1pt}'_{\hskip -1pt 0}   (0; t) $, and $\widetilde{\varvv}_{0} (- y; t)$ are all negligibly small, while  $ \widetilde{\varvv}_{0} (  y; t) $ is bounded but  negligibly small unless  $ y \asymp t^2 $. 
\end{lem}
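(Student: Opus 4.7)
The plan is to exploit the oscillation of $x^{it}$ (and, in the last case, the oscillatory/exponential behaviour of $B_0$) via repeated integration by parts and a single stationary-phase estimate. Throughout, since $\mathrm{supp}(\varvv) \subset (0,\infty)$ is compact, we have $x \asymp 1$ uniformly on the integration range.

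For $\widetilde{\varvv}_0(0;t)$ and $\widetilde{\varvv\hskip 1pt}'_{\hskip -1pt 0}(0;t)$, I would simply integrate by parts $A$ times against $x^{it}$: the amplitudes $\varvv(x) x^{-1/2}$ and $\varvv(x) x^{-1/2} \log x$ are smooth and compactly supported in $(0,\infty)$, so (equivalently, after the substitution $u = \log x$, one recognizes a Fourier transform at frequency $t$) each step gains a factor $1/t$, giving a total bound $O_A(t^{-A})$, which is negligible for $t > T^\vepsilon$. The case $\widetilde{\varvv}_0(-y;t)$ is handled by the exponential decay in \eqref{4eq: asymptotic, R+}: on $\mathrm{supp}(\varvv)$ we have $xy \asymp y > T^\vepsilon > 1$, so $B_0(-xy) \Lt \exp(-4\pi\sqrt{xy})/(xy)^{1/4}$, and since the only polynomial factor is $y^{1/2+it}$, the whole transform is exponentially small in $\sqrt{y}$.

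The heart of the matter is $\widetilde{\varvv}_0(y;t)$. I would plug in the oscillatory asymptotic for $B_0(xy)$ from \eqref{4eq: asymptotic, R+} and absorb $(xy)^{-1/4}$ together with $W_0(\pm\sqrt{xy})$ into smooth uniformly bounded amplitudes, reducing to integrals of the shape
$$
y^{1/4+it} \int \varvv_{\pm}(x,y)\, e\!\left( t\log x/(2\pi) \pm 2\sqrt{xy} \right) dx,
$$
where the $\varvv_{\pm}$ are smooth, compactly supported in $x$, and bounded together with all derivatives uniformly in $y$. In the $+$-case the phase derivative $t/(2\pi x) + \sqrt{y/x}$ is bounded below by $t$, so repeated integration by parts yields a negligible contribution. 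In the $-$-case the phase $\Phi_-(x) = t\log x/(2\pi) - 2\sqrt{xy}$ has its unique stationary point at $x_0 = t^2/(4\pi^2 y)$, which lies in $\mathrm{supp}(\varvv)$ precisely when $y \asymp t^2$. If $y$ sits outside this band, I would split into $\sqrt{y} \ll t$ (where $\Phi_-'$ is dominated by $t/(2\pi x)$) and $\sqrt{y} \gg t$ (where $\Phi_-'$ is dominated by $-\sqrt{y/x}$); in either case $|\Phi_-'| \gg \max(t,\sqrt{y}) > T^\vepsilon$ on the support, and repeated integration by parts yields arbitrary polynomial decay. For $y \asymp t^2$, I would apply stationary phase (e.g.\ \cite[Theorem 1.1.1]{Sogge}): a short computation gives $|\Phi_-''(x_0)| \asymp y^2/t^3 \asymp t$, so the stationary-phase main term has size $t^{-1/2}$, and the prefactor $y^{1/4} \asymp t^{1/2}$ then delivers the claimed $O(1)$ bound.

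The only delicate step is the uniform non-stationary bound for $\Phi_-'$ once $y$ leaves the band $y \asymp t^2$, which is why the transitional regime must be analyzed by splitting into the two sub-cases above; all other pieces are routine applications of tools already deployed in Sections 3 and 4.
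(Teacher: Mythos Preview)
Your proposal is correct and follows essentially the same route as the paper: integration by parts for the two Mellin integrals, exponential decay of $B_0$ for the $-y$ case, and insertion of the oscillatory asymptotic \eqref{4eq: asymptotic, R+} for the $+y$ case followed by non-stationary/stationary-phase analysis. The only cosmetic differences are that the paper first substitutes $x \to x^2$ (turning $2\sqrt{xy}$ into $2\sqrt{y}\,x$) and then invokes Huxley's second-derivative test rather than Sogge's stationary-phase theorem for the $O(1)$ bound when $y \asymp t^2$; the underlying argument is identical.
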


\begin{proof}
The proof is standard. It is well-known that the Mellin integrals  $\widetilde{\varvv}_{0} (0; t)$, $ \widetilde{\varvv\hskip 1pt}'_{\hskip -1pt 0}   (0; t) $ are negligibly small.  In view of  \eqref{4eq: asymptotic, R+}, it is clear that $ \widetilde{\varvv}_{0} (- y; t) $ is exponentially small, while for $ \widetilde{\varvv}_{0} (  y; t) $ it is reduced to consider integrals of the form
\begin{align*}
	y^{1/4+it} \int_0^{\infty} \varww^{\spm} (x) e \lp (t/\pi) \log x \pm 2 \sqrt{y} x \rp \nd x, 
\end{align*}
for suitable $ \varww^{\spm} (x) \in C_c^{\infty} (0, \infty) $. By repeated partial integration (again, one can use \cite[Lemma 7.1]{Qi-GL(3)}), the integral is negligibly small unless  the sign is $+ $ and $ t   \asymp \hskip -2pt \sqrt{y}$, in which case, the second derivative test (\cite[Lemma 5.1.3]{Huxley}) may be applied to prove that the integral is bounded. 
\end{proof}

A direct consequence of     Lemma \ref{lem: Voronoi} (in the special case $c=1$) and Lemma \ref{lem: Hankel, it}  is the following truncated Vorono\"i summation formula. 

\begin{cor}\label{cor: dual}
Let $t, N > T^{\vepsilon}$. 	We have 
	\begin{align}
\sum_{n \sasymp N} \frac {\tau (n)}  {n^{1/2-it}} \varvv (n/N) = \sum_{m \sasymp t^2/N}  	\frac {\tau (m)}  {m^{1/2+it}} \widetilde{\varvv}_{0} (Nm; t) + O (T^{-A}), 
	\end{align}
for $ \varvv (x) $ fixed and $ \widetilde{\varvv}_{0} (y; t) $ bounded. 
\end{cor}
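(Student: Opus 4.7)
\textbf{Proof plan for Corollary \ref{cor: dual}.} The plan is to apply the classical Vorono\"i formula (Lemma \ref{lem: Voronoi}) in the trivial modulus case $c=1$ (so $a = \bar{a} = 0$ and all additive twists disappear), to the test function
\[
\varww (x) = \frac{\varvv (x/N)}{x^{1/2 - it}} ,
\]
which lies in $C_c^{\infty}(0,\infty)$ since $\varvv$ does. The identity from Lemma \ref{lem: Voronoi} then reads
\[
\sum_{n} \tau(n)\, \varww(n) = 2\gamma\, \widetilde{\varww}_0(0) + \widetilde{\varww}_0'(0) + \sum_{m \neq 0} \tau(|m|)\, \widetilde{\varww}_0(m),
\]
so it remains to translate each of the three terms on the right into the notation of Lemma \ref{lem: Hankel, it} and invoke the truncation properties proved there.

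First I would perform the change of variable $x = Nu$ in the integrals defining $\widetilde{\varww}_0(0)$, $\widetilde{\varww}_0'(0)$, and $\widetilde{\varww}_0(\pm m)$. A straightforward computation gives
\[
\widetilde{\varww}_0(0) = N^{1/2+it}\, \widetilde{\varvv}_0(0;t), \qquad \widetilde{\varww}_0'(0) = N^{1/2+it}\bigl( (\log N)\, \widetilde{\varvv}_0(0;t) + \widetilde{\varvv\hskip 1pt}'_{\hskip -1pt 0}(0;t) \bigr),
\]
and, using the definition of $\widetilde{\varvv}_0(\pm y;t)$ together with the factor $(Nm)^{1/2+it}$ appearing there,
\[
\widetilde{\varww}_0(\pm m) = \frac{1}{|m|^{1/2+it}}\, \widetilde{\varvv}_0(\pm N|m|;t).
\]
By Lemma \ref{lem: Hankel, it} the quantities $\widetilde{\varvv}_0(0;t)$ and $\widetilde{\varvv\hskip 1pt}'_{\hskip -1pt 0}(0;t)$ are negligibly small, which absorbs both Mellin terms into $O(T^{-A})$, and $\widetilde{\varvv}_0(-Nm;t)$ is likewise negligibly small, which kills the contribution of all negative $m$. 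For positive $m$, Lemma \ref{lem: Hankel, it} tells us that $\widetilde{\varvv}_0(Nm;t)$ is bounded and negligibly small outside the range $Nm \asymp t^2$, equivalently $m \asymp t^2/N$, which yields the desired localisation of the dual sum.

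Finally, since $\varvv$ is compactly supported in $(0,\infty)$, the left-hand side $\sum_n \tau(n)\varww(n)$ is automatically restricted to $n \asymp N$, which matches the stated form of the corollary. Assembling these observations produces the claimed identity with an error of size $O(T^{-A})$. The only point requiring mild care is verifying that the implicit constant in the negligible Mellin pieces can be made uniform in $N$; but since $\log N \ll T$ under the hypothesis $N > T^{\vepsilon}$ (only a polynomial factor, easily absorbed by the super-polynomial decay in Lemma \ref{lem: Hankel, it}), no real obstacle arises. The argument is essentially a bookkeeping exercise combining Lemma \ref{lem: Voronoi} at $c=1$ with the three decay statements of Lemma \ref{lem: Hankel, it}.
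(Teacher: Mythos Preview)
Your proposal is correct and follows precisely the route indicated in the paper: apply Lemma~\ref{lem: Voronoi} with $c=1$ to the weight $\varww(x)=\varvv(x/N)x^{-1/2+it}$, rescale by $x=Nu$ to land in the notation of Lemma~\ref{lem: Hankel, it}, and then use the three decay statements there to discard the Mellin terms, the negative frequencies, and the positive frequencies outside $m\asymp t^2/N$. One small slip: your remark that ``$\log N\ll T$ under $N>T^{\vepsilon}$'' is backwards (that hypothesis gives a lower bound on $\log N$, not an upper one); the absorption of the factor $N^{1/2}$ into the $O(T^{-A})$ error relies instead on the implicit polynomial upper bound $N\leqslant T^{O(1)}$ coming from the dyadic decomposition in which the corollary is applied.
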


\section{Setup} \label{sec: setup}

As our motivation, we start with reviewing an asymptotic formula in \cite{Qi-Liu-Moments} for the twisted second moment of central $L$-values as follows:  
\begin{equation}\label{5eq: twisted second moment}
\begin{split}
	& \quad \ \sum_{f \in  \SB}  \omega_f \lambda_f (n_1) L \big( \tfrac 1 2 , f \big)^2 k  (t_f)  + \frac 1 {4\pi} \int_{-\infty}^{\infty}  \omega (t) \tau_{it} (n_1)    {\left| \zeta \big(\tfrac 1 2 + it \big) \right|^{4} }  
	k  (t) \shskip   \nd \shskip t \\
& =	 \frac { 4 \tau (n_1) M T} { \pi \sqrt{\pi   n_1}} \bigg(   \log \frac {T} { \sqrt{n_1}} + \gamma - \log 2\pi  \bigg) + O_{\vepsilon} \bigg( \bigg( \frac {M^3} {\sqrt{n_1}T} + \frac {\sqrt{n_1 T}  } {\sqrt{M} }\bigg) T^{\vepsilon}\bigg),  
\end{split}
\end{equation}
 for any $n_1 \leqslant T^{2-\vepsilon}$, while the second error term can be removed in the case $n_1 \leqslant M^{2-\vepsilon}$. This formula is proven by the ``Kuznetsov--Vorono\"i" approach, along with analysis for the Hankel and Mellin transforms of Bessel integrals. It should be stressed that the main term has two sources: half is from the diagonal term in Kuznetsov while the other half is from the zero frequency after Vorono\"i. 
 
 Heuristically, by summing up to $n_1 \leqslant T^{1+\vepsilon}$ according to the approximate functional equations (see \eqref{5eq: AFE, 1}, \eqref{5eq: AFE, 2}, and \eqref{2eq: derivatives for V(y, t), 1})\footnote{A subtle issue is that  the spectral $t_f$ and $t$ are involved in the weights $V_1 (n_1; t_f)$ and $V_1 (n_1; t)$, so the arguments and results in  \cite{Qi-Liu-Moments} can not be applied directly and  must be adapted slightly.}, albeit with a   weaker error term, we may already deduce from \eqref{5eq: twisted second moment} an asymptotic formula for the cubic moment of the form \eqref{1eq: smooth, asymptotic} in Theorem \ref{thm: T and M}. To strengthen the error term, in addition to the Vorono\"i summation used in  \cite{Qi-Liu-Moments}, we need to also apply   Poisson summation to the $n_1$-variable as Conrey and Iwaniec did in \cite{CI-Cubic}.


Now we turn to the smooth spectral cubic moment: 
\begin{align}
	\SM_3^{\snatural}  =	\sum_{f \in  \SB}  \omega_f L \big( \tfrac 1 2 , f \big)^3 k  (t_f)  + \frac 1 {4\pi} \int_{-\infty}^{\infty}  \omega (t)    {\left| \zeta \big(\tfrac 1 2 + it \big) \right|^{6} }  
	k  (t) \shskip   \nd \shskip t .
\end{align} For brevity, we suppress $T, M$ from our notation here, but keep in mind that we need to average the $T$-parameter later for the proof of Theorem \ref{thm: T and H}.

\delete{and
\begin{align}
	\SN_3^{ \shskip \flat }   =	\sum_{f \in  \SB}  \omega_f L \big( \tfrac 1 2 , f \big)^3 \varww  (t_f)  + \frac 1 {\pi} \int_{-\infty}^{\infty}  \omega (t)    {\left| \zeta \big(\tfrac 1 2 + it \big) \right|^{6} }  
	\varww (t) \shskip   \nd \shskip t ,
\end{align} 
with $k (t) = k_{T, M} (t)$ defined by \eqref{1eq: defn k(nu)}, and 
\begin{align}\label{5eq: defn w(nu)}
	 \varww  (t) = \varww_{T, M, H} (t) = \frac 1 { \sqrt{\pi}  M} \int_{\, T- H}^{T + H} k_{K, M} (t) \nd K  .
\end{align}}

By the    approximate functional equations \eqref{5eq: AFE, 1} and \eqref{5eq: AFE, 2},    we infer that
\begin{equation}\label{6eq: M, before Kzunetsov}
	\begin{split}
		\SM_3^{\snatural} = 4  \mathop{\sum \sum}_{ n_1, n_2 }  \frac {   \tau (n_2) } { \sqrt{n_1 n_2}  }  & \Bigg\{    \sum_{f \in  \SB  } \hskip -1pt   \omega_f  \lambda_f  ( n_1 ) \lambda_f  (n_2 ) V_1   ( n_1 ; t_f   )  V_2   ( n_2 ; t_f   ) k  ( t_f ) \\
& + \frac 1 {4 \pi} \int_{-\infty}^{\infty}  \omega (t)     
  \tau_{it}  ( n_1 ) \tau_{it}  (n_2 ) V_1   ( n_1 ; t   )  V_2   ( n_2 ; t ) k  ( t ) \shskip   \nd \shskip t	\Bigg\} .
	\end{split}
\end{equation}  
In view of  \eqref{2eq: derivatives for V(y, t), 1} in Lemma \ref{lem: afq}, at the cost of a negligible error term, we may truncate the $n_1$- and $n_2$-sums to the ranges $ n_1 \leqslant T^{  1 + \vepsilon}$ and $ n_2 \leqslant T^{  2 + \vepsilon}$ respectively. 

\section{Applying the Kuznetsov Trace Formula}
Next, we apply the Kuznetsov trace formula in Proposition \ref{lem: Kuznetsov} to the expression between the large brackets in \eqref{6eq: M, before Kzunetsov}, and then use \eqref{1eq: approx of V1} in Lemma \ref{lem: afq} with $U = \log T$ (so that the errors therein are negligible) to reformulate the resulting off-diagonal terms. More explicitly, we have
\begin{align}
	 \SM_3^{\snatural} = \SD_3 + \SO_3 + O (T^{-A}),  
\end{align}
where $\SD_3$ is the diagonal sum 
\begin{align}\label{6eq: diagonal}
	\SD_3 = 4 \sum_{n \leqslant T^{1+\vepsilon}} \frac {\tau (n)} {n} \SDH_3 (n), 
\end{align}
with
\begin{align}
\SDH_3 (n) = \frac {1} {2 \pi^2}	\int_{-\infty}^{\infty} V_1 (n; t) V_2 (n; t) k (t)    \tanh (\pi t) t  \shskip \nd \shskip t,  
\end{align}
while  $\SO_3$ is the off-diagonal contribution in the form
\begin{align}
	\SO_3 =  - \frac 1 {\pi^2} \int_{ \vepsilon - i U}^{\vepsilon + iU} \int_{ \vepsilon - i U}^{\vepsilon + iU} \SO_3 (v_1, v_2) \zeta (1+2v_2) e^{v_1^2 +2 v_2^2}  \frac {\nd v_1} {v_1} \frac {\nd v_2} {v_2}, 
\end{align}
with
\begin{align}\label{6eq: O3}
	 \SO_3 (v_1, v_2) = \sum_{\spm} \sum_{c }   \mathop{\mathop{\sum\sum}_{n_1 \leqslant T^{1+\vepsilon}}}_{ n_2 \leqslant T^{2+\vepsilon} }   \frac {\tau (n_2)} {n_1^{1/2+v_1} n_2^{1/2+v_2}} \frac{S(n_1, \pm n_2; c)} {c} \SDH \bigg( \hskip -1pt \pm \frac {n_1 n_2} {c^2}; v_1, v_2 \bigg), 
\end{align}
\begin{align}
	\SDH (x; v_1, v_2) = \frac 1 {2\pi^2} \int_{-\infty}^{\infty} h(t; v_1, v_2) B_{it} (x)   \tanh (\pi t) t  \shskip \nd \shskip t , 
\end{align}
and
\begin{align}
	h (t; v_1, v_2) = 
	k  (t) \delta (v_1, t) \delta (v_2, t)^2. 
\end{align}

It follows from \eqref{4eq: asymptotic for V2} that 
\begin{align}\label{6eq: H3, 2}
	\SDH_3 (n) = \frac {1} {2 \pi^2}	\int_{-\infty}^{\infty} V_1 (n; t) \lp \gamma  +  \psi_1 (t)  -  \log   \sqrt{n}   \rp k (t)    \tanh (\pi t) t  \shskip \nd \shskip t + O  (T^{-A}).   
\end{align}

By Lemma \ref{lem: x small} and \ref{lem: H small}, one may  impose the condition $       n_1 n_2     / c^2   \Gt  T^{2 } $ to the summations in \eqref{6eq: O3}, with the cost of a negligible error. 

\section{Applying the Vorono\"i and Poisson Summation Formulae} 
At this point, we introduce  smooth dyadic partitions for the $n_2$- and $n_1$-sums prior to the application of Vorono\"i and Poisson. More explicitly, we split $\SO_3 (v_1, v_2)$ into the sum of
\begin{equation*}
	\begin{split}
	\frac 1 {N_1^{1/2+v_1} N_2^{1/2+v_2}}    
 \sum_{\spm} \sum_{c} 
   \mathop{\sum\sum}_{ n_1, n_2 } \tau (n_2) \frac {S(n_1, \pm n_2; c)} {c} \varww^{\spm} \bigg( \frac {n_1} {N_1}, \frac {n_2} {N_2} , \frac {N_1 N_2} {c^2}; v_1, v_2\bigg), 
	\end{split}
\end{equation*}
for dyadic $1/2 < N_1 \leqslant T^{1+\vepsilon}$ and $1/2 < N_2 \leqslant T^{2+\vepsilon}$, where
\begin{align*}
	\varww^{\spm}  (  x_1, x_2   ;   \varLambda ; v_1, v_2 )  =   \varww (x_1; v_1) \varww (x_2; v_1) \SDH_2  ( \pm \varLambda x_1 x_2 ; v_1 ,  v_2  )    , \quad \varww (x; v ) = \frac{\varvv (x)   } {x^{1/2 + v }},
\end{align*}
for suitable $\varvv (x) \in C_0^{\infty}[1,2]$. For the moment, one may restrict the $c$-sum to the range $c \Lt \sqrt{N_1 N_2} / T$.

For the  $n_2$-sum,  we   open the Kloosterman sum $S ( n_1, \pm   n_2   ; c  )$ and apply the Vorono\"i summation formula as in Lemma \ref{lem: Voronoi}. %

For the entire zero-frequency contribution $\SZ_3$, after reversing the procedures of truncation and partition (of sums and integrals), the arguments in \cite[\S 12.2]{Qi-Liu-Moments} can be easily adapted to prove   
\begin{align}\label{7eq: Z3}
	\SZ_3 = 2 \sum_{n_1 \leqslant T^{1+\vepsilon}} \frac {1} {\sqrt{n_1 \hskip -1pt}} \SZ_3 (n_1) + O (T^{-A}), 
\end{align}
with 
\begin{align}\label{7eq: Z3, 2}
	\SZ_3 (n_1) = \frac 1 {2\pi^2} \frac {\tau (n_1)} {\sqrt{n_1  \hskip -1pt}} \int_{-\infty}^{\infty} V_1 (n_1; t) \lp 2 \gamma  + 2 \psi_1 (t)  -  \log  n_1   \rp k (t)    \tanh (\pi t) t  \shskip \nd \shskip t . 
\end{align}
We only remark that it is crucial to have the formula:
\begin{align*}
	\int_{-\infty}^{\infty}   B_{it} (x  ) |x|^{  s - 1}  { \nd x } = \frac{\gamma (s, t)}{\gamma (1-s, t)}. 
\end{align*}
By comparing \eqref{6eq: diagonal} and \eqref{6eq: H3, 2} with \eqref{7eq: Z3} and \eqref{7eq: Z3, 2}, it is clear that $\SD_3$ and $\SZ_3$ are equal to each other up to a negligible error. 

For the dual ($m_2$-)sum after Vorono\"i,   the exponential sum turns into  $S (n_1\mp m_2, 0; c)$. For the  $n_1$-sum, similarly,  we   open  the Ramanujan sum $S (n_1\mp m_2, 0; c)$ and apply the Poisson summation  formula as in Lemma \ref{lem: Poisson}. Now the dual exponential sum reduces to $e (\pm m_1 m_2/c)$ along with the condition $(m_1, c) = 1$.  Note that the zero frequency in the $m_1$-sum exists only when $c = 1$ and is negligibly small by Lemma \ref{lem: asymptotic of J}. 

It is left to consider the sum $\SS^{\spm} (N_1, N_2)$ defined by
\begin{equation}\label{8eq: S = }
N_1^{1/2-v_1} N_2^{1/2-v_2} \sum_{c > 0} \frac 1 {c^2} \hskip -1pt \mathop{\mathop{\sum\sum}_{m_1, m_2 \neq 0}}_{(m_1, c) = 1}   \tau (m_2) e \Big( \hskip -1pt \pm \frac {m_1m_2} {c} \Big) \widehat{\widetilde{\varww}}{}^{\spm} \hskip -1pt \bigg( \hskip -1pt \frac {m_1 N_1} {c}, \frac {m_2 N_2} {c^2} ; \frac {N_1 N_2} {c^2} \hskip -1pt \bigg),  
\end{equation}
where $ \widehat{\widetilde{\varww}}^{\spm} $ is the Fourier--Hankel transform defined by
\begin{align*}
	\widehat{\widetilde{\varww}}^{\spm} (y_1, y_2; \varLambda) = \int_0^{\infty} \int_0^{\infty} \varww^{\spm} (x_1, x_2 ; \varLambda ) e (-x_1 y_1) B_0 (x_2 y_2) \nd x_1 \nd x_2 , 
\end{align*}
for 
\begin{align*}
	\varww^{\spm}  (  x_1, x_2   ;   \varLambda  )  =   \varww_1 (x_1 ) \varww_2 (x_2 ) \SDH_2  ( \pm \varLambda x_1 x_2   )   
\end{align*}
with weight functions $\varww_1, \varww_2 \in C_c^{\infty} [1, 2]$ such that $ \varww_1^{(i)} (x ), \varww_2^{(i)} (x ) \Lt_{i} U^i  $ ($\varww_1 (x) = \varww(x; v_1)$ and $\varww_2 (x) = \varww(x; v_2)$). For brevity, we have suppressed $v_1$ and $v_2$ from our notation.

\section{Treating the Main Term}\label{sec: main term}
 
The purpose of this section is to prove that there is a certain cubic polynomial $P^{\snatural}_3 (X)$  such that
\begin{align}\label{8eq: main}
	 \SD_3 + \SZ_3 = \sqrt{\pi} MT P^{\snatural}_3 (\log T) + O (M^3 \log^2 T /T). 
\end{align}

In view of  \eqref{6eq: diagonal}, \eqref{6eq: H3, 2}, \eqref{7eq: Z3}, \eqref{7eq: Z3, 2}, along with \eqref{2eq: derivatives for V(y, t), 1}, \eqref{1eq: approx of V1}, and \eqref{3eq: defn k(nu)}, by completing the sum,  truncating the  integrals, and moving the sum inside the integrals,  it is reduced to consider 
\begin{align}\label{8eq: main integral}
\frac {8} {\pi^2}	\int_{T - M^{1+\vepsilon}}^{T + M^{1+\vepsilon}}   \lp   (  \gamma  +   \psi_1 (t)  ) \theta (t)    + \theta_1 (t) \rp   e^{- (t-T)^2/ M^2}    t  \shskip \nd \shskip t , 
\end{align}
where $\theta (t)$ and $\theta_1 (t)$ are the integrals
\begin{equation}\label{8eq: theta (t)}
\begin{split}
	\theta (t) & =	\frac 1 {2   \pi i } \int_{ \vepsilon - i U}^{\vepsilon + i U}  \delta (v, t)  \zeta (1+v)^2 e^{ v^2  }      \frac {\nd v} {v}, \\
\theta_1 (t) & =	\frac 1 {2   \pi i } \int_{ \vepsilon - i U}^{\vepsilon + i U}  \delta (v, t)  \zeta (1+v) \zeta' (1+v) e^{ v^2  }      \frac {\nd v} {v} . 
\end{split}
\end{equation}
Define   $\gamma$, $\gamma_1$, and $\gamma_2$ by 
\begin{align}
	\zeta (s) = \frac 1 {s-1} + \gamma - \gamma_1 (s-1) + \frac {\gamma_2} 2 (s-1)^2 + \cdots, \qquad s \ra 1,
\end{align}
so that
\begin{align}\label{8eq: zeta, 1}
	& \zeta (1+v)^2 = \frac 1 {v^2} + \frac {2 \gamma} {v} +  \gamma^2 -2 \gamma_1  + \cdots, \qquad \qquad \qquad \  v \ra 0, \\ \label{8eq: zeta, 2}
	& \zeta (1+v) \zeta' (1+v) = -\frac 1 {v^3} - \frac { \gamma} {v^2} - \gamma  \gamma_1   + \frac {\gamma_2} 2    +  \cdots, \qquad \shskip v \ra 0. 
\end{align}
Define $\psi_1 (t)$, $\psi_2(t)$, and $\psi_3 (t)$ by
\begin{align}\label{8eq: delta}
	\delta (v, t) = 1 + \psi_1 (t) v + \frac {\psi_2 (t)} 2  v^2 + \frac {\psi_3 (t)} 6  v^3 + \cdots, \qquad \ \, \,  v \ra 0, 
\end{align} 
By the Stirling  formula for the derivatives of $\log \Gamma (s)$, we have
\begin{align}\label{8eq: psi}
	\psi_{k} (t) =    \lp \log \RC (t) - \log (2 \pi) \rp^k + O \big(  \log^{k-1} \RC (t)  / \RC(t)^2 \big). 
\end{align}

Now we shift the integral contour in \eqref{8eq: theta (t)} further down to $\mathrm{Re} (v) = - A$ and calculate the residues at $ v = 0 $ with the aid of  \eqref{8eq: zeta, 1}--\eqref{8eq: psi}. It follows that the integral in \eqref{8eq: main integral} turns into
\begin{align}\label{8eq: integral, 2}
	\frac {8} {\pi^2}	\int_{T - M^{1+\vepsilon}}^{T + M^{1+\vepsilon}}   S_3  (\log \RC(t) - \log 2\pi)  e^{- (t-T)^2/ M^2}    t  \shskip \nd \shskip t + O (M \log^2 T / T), 
\end{align}
where  $S_3 (X)$ is defined by
\begin{align}\label{9eq: defn of S}
	S_3 (X) = \frac 1 3 X^3 + 2 \gamma X^2 + (3\gamma^2 - 2 \gamma_1) X + \gamma^3 - 3 \gamma_1 +\frac   {\gamma_2} 2. 
\end{align} 
Finally, by the change of variable $t \ra T+Mt$, one can easily deduce  \eqref{8eq: main}  with
\begin{align}\label{9eq: defn of Pn}
	 P^{\snatural}_3 (X) = \frac {8} {\pi^{2}} S_3 (X - \log 2\pi). 
\end{align}

\section{Applying the Second Vorono\"i and the Large Sieve} \label{sec: apply large sieve}

In this section, we use the analysis of Young as in \S \ref{sec: analysis Hankel}, the   Vorono\"i as in \S \ref{sec: second Voronoi},  and the large sieve of Gallagher as in \S \ref{sec: large sieve} to estimate the sum $ \SS^{\spm} (N_1, N_2)$ defined by \eqref{8eq: S = }.

By Lemma \ref{lem: asymptotic of J} and \ref{lem: Phi + -}, we infer that, up to a negligible error,  
\begin{equation}\label{10eq: S +- =}
 \SS^{\spm} (N_1, N_2) =	\frac {M T^{1+v}} {N_1^{v_1} N_2^{v_2}}  \sum_{c \shskip > 0} \frac 1 {\sqrt{c}} \hskip -1pt \mathop{\mathop{\sum\sum}_{|m_1|, m_2  > 0}}_{(m_1, c) = 1}   \frac{\tau (m_2)} {\sqrt{|m_1 m_2|}} \Phi^{\spm} \lp \frac {m_1 m_2} {c} \rp ,   
\end{equation}
where the summations are subject to the following conditions:
\begin{equation}
	|m_1| N_1  \asymp \sqrt{m_2 N_2},  
\end{equation}
and
\begin{align}
	c < \frac {\sqrt{N_1 N_2}} {M^{1-\vepsilon} T }, \qquad |m_1m_2| \asymp \sqrt{N_1 N_2},  
\end{align}
in the $+$ case, or
\begin{align}
	c \asymp \frac {\sqrt{N_1 N_2}} {  T }, \qquad |m_1m_2| <\frac {\sqrt{N_1 N_2}} {M^{3-\vepsilon}},  
\end{align}
in the $-$ case. In particular, it follows    that $$y_2 = \frac{ m_2 N_2}   {c^2} \geqslant \frac {  N_2} {c^2} \Gt \frac {T^2} {N_1} \geqslant T^{1-\vepsilon}, $$ and $$|x| = \frac { |m_1m_2|} c \leqslant |m_1m_2| \Lt \sqrt{N_1 N_2} \leqslant T^{3/2+\vepsilon}, $$ 
by   $N_1  \leqslant T^{1+\vepsilon}$ and $N_2 \leqslant T^{2+\vepsilon}$, so the assumptions in Lemma   \ref{lem: Phi + -} are satisfied. 
Moreover, recall that $v = v_1 + 2 v_2$ and $\mathrm{Re}(v_1) = \mathrm{Re}(v_2) = \vepsilon$. 

Next, we use the M\"obius function to relax the condition $(c, m_1) = 1$, and then introduce  dyadic partitions to the variables $c$, $m_1$, and $m_2$, it follows that if we define
\begin{align}
\overline{S_{it} (C)} = \sum_{c \sim C} \frac {1} {c^{1/2+it}}, \qquad	S_{it} (L) = \sum_{m \sim L} \frac {1} {m^{1/2-it}}, \qquad S_{it}^{\snatural} (L) = \sum_{m \sim L} \frac {\varvv (m)} {m^{1/2-it}}, 
\end{align}
for suitable $\varvv \in C_c^{\infty} [1,2]$, 
 then Lemma \ref{lem: Phi after Mellin} implies that $\SS^{\spm} (N_1, N_2)$ is bounded by the supremum of 
\begin{equation}\label{almostthere}
 \ST^{\spm} (C^{\spm}, L_1^{\spm}, L_2^{\spm}) =	M      T^{\vepsilon}   \int_{|t| \asymp U^{\spm}} \big|\overline{S_{it} (C^{\spm})} S_{it} (L_1^{\spm}) S_{it}^{\snatural} (L_2^{\spm}) \big| 
 \nd t  ,
\end{equation}
for dyadic parameters $C^{\spm}$, $L_1^{\spm}$, and $L_2^{\spm}$ in the ranges
\begin{align}\label{10eq: range +}
	& C^{\splus} < \frac {\sqrt{N_1N_2}} {M^{1-\vepsilon}  T}, \qquad L^{\splus}_1 \Lt   \frac {\sqrt{L^{\splus}_2 N_2}} {N_1}, \qquad    L^{\splus}_2 \asymp N_1, \\
\label{10eq: range -}	& C^{\sminus} \Lt \frac {\sqrt{N_1 N_2}} {T}, \qquad L^{\sminus}_1 \Lt \frac {\sqrt{L^{\sminus}_2 N_2}} {N_1}, \qquad L^{\sminus}_2 < \frac {N_1} {M^{2-\vepsilon}}, 
\end{align}
and for
\begin{align}\label{10eq: U +-}
	U^{\splus} = \frac { C^{\splus} T^2}{L^{\splus}_1 L^{\splus}_2}, \qquad U^{\sminus} = \frac { \sqrt[3]{L^{\sminus}_1 L^{\sminus}_2 T^2} } {\sqrt[3]{C^{\sminus}}}. 
\end{align}
Recall from \eqref{5eq: range of U} that \begin{align}\label{8eq: range of U}
	T^{\vepsilon} < U^{\spm} < \frac {T} {M^{1-\vepsilon}}. 
\end{align}  

In view of Corollary \ref{cor: dual}, for the $m_2$-sum of length $L_2^{\spm}$, its dual sum is of length $U^{\spm\, 2} / L_2^{\spm}$, so we can always ensure that the length of summation does not exceed $ U^{\spm}$. For the $c$-sum and the $m_1$-sum, we take the complex conjugate of the former and group them together so that the new sum is over $c m_1$ and of length $C^{\spm} L^{\spm}_1$.  Furthermore, it follows from \eqref{10eq: range +}, \eqref{10eq: range -}, and \eqref{10eq: U +-}, along with $N_2 \leqslant T^{2+\vepsilon}$, that
\begin{align*}
	C^{\splus} L^{\splus}_1 \Lt \frac {C^{\splus} L^{\splus}_2 N_2} {L^{\splus}_1 N_1^2} \Lt \frac {C^{\splus}   N_2} {L^{\splus}_1 L^{\splus}_2} \Lt	U^{\splus} T^{\vepsilon}, 
\end{align*}
and
\begin{align*}
	C^{\sminus} L^{\sminus}_1 \Lt \frac {C^{\sminus} \sqrt[3]{L^{\sminus}_1 L^{\sminus}_2 N_2}} {\sqrt[3]{N_1^2}} \Lt 
	\frac {  \sqrt[3]{L^{\sminus}_1 L^{\sminus}_2} N_2} {\sqrt[3]{C^{\sminus}T^4}} \Lt	U^{\sminus} T^{\vepsilon}.
\end{align*}
We conclude by Cauchy--Schwarz and Lemma \ref{lem: large sieve} that 
\begin{align}\label{10eq: bound for T, 1}
	\ST^{\spm} (C^{\spm}, L_1^{\spm}, L_2^{\spm}) \Lt M U^{\spm} T^{\vepsilon}, 
\end{align}
and hence by \eqref{8eq: range of U} that
\begin{align}\label{10eq: bound for T, 2}
	\ST^{\spm} (C^{\spm}, L_1^{\spm}, L_2^{\spm}) \Lt  T^{1+\vepsilon}. 
\end{align}

\section{Conclusion}

Firstly, Theorem \ref{thm: T and M} follows immediately from \eqref{8eq: main} and \eqref{10eq: bound for T, 2}. 

Next, we prove Theorem \ref{thm: T and H}. To this end, we invoke \eqref{1eq: M = int M} in Lemma  \ref{lem: average}, change $T$ into $K$, and average \eqref{8eq: main} and \eqref{10eq: S +- =} over $K$ from $T-H$ to $T+H$. Clearly, \eqref{8eq: main} yields the main term in \eqref{1eq: asymptotic} along with an error $ O (M^2 H \log^2 T / T) = O (M H T^{\vepsilon}) $. As for \eqref{10eq: S +- =}, after the dyadic partitions, we use the expression \eqref{5eq: average of Phi} in Lemma \ref{lem: average T}  for $$ \int_{T-H}^{T+H} K^{1+v} \Phi^{\spm}_{K, M} (m_1m_2/c) \nd K. $$
It follows from \eqref{10eq: bound for T, 1} and \eqref{10eq: bound for T, 2}  that the contributions from the terms with $\Phi^{\oldstylenums{1}  \, \spm}$ and $\Phi^{\flat  \, \spm}$ on the right of \eqref{5eq: average of Phi} are bounded by $O (T^{1+\vepsilon} + H T^{\vepsilon}) = O (T^{1+\vepsilon})$  and $O (M H T^{\vepsilon})$ respectively; the cancellation of $U^{\spm}$ is crucial here.  We conclude that 
\begin{align}
	\frac 1 {\sqrt{\pi} M} \int_{T-H}^{T+H} \SM_3^{\snatural} (K, M) \nd K = & \int_{T-H}^{T+H}  K P^{\snatural}_3 (\log K) \nd K   + O  (  T^{1+\vepsilon} + M H T^{\vepsilon} ) .  
\end{align}
Also note that there is an error term $ O (M T^{1+\vepsilon})$  in \eqref{1eq: M = int M} in Lemma  \ref{lem: average}. 
Consequently, we obtain \eqref{1eq: asymptotic} on choosing $M = T^{\vepsilon}$. 

Finally,  Corollary \ref{cor: Ivic} (Ivi\'c's strong moment conjecture) follows from Theorem \ref{thm: T and H} if we choose  $H=T/3$    and apply a dyadic summation. It is clear that $P_3$ and $P^{\snatural}_3$ are related by 
\begin{align}
	\frac {\nd \lp K^2 P_3(\log K) \rp} {\nd K} = K P^{\snatural}_3 (\log K). 
\end{align}
	
	
	\newcommand{\etalchar}[1]{$^{#1}$}

\end{document}